\documentclass[lettersize,onecolumn]{IEEEtran}
\usepackage{cite}
\usepackage{amsmath,amssymb,amsfonts}
\usepackage{algorithmic}
\usepackage{graphicx}
\usepackage{textcomp}
\usepackage{xcolor}
\usepackage{authblk}
\usepackage[linesnumbered,ruled,vlined]{algorithm2e}
\usepackage{subcaption}
\usepackage{bbm}

\newtheorem{theorem}{\textbf{Theorem}}

\newtheorem{lemma}{\textbf{Lemma}}
\newtheorem{remark}{Remark}

\newtheorem{definition}{\textbf{Definition}}

\newcommand{\tr}{\text{Tr}}
\newcommand{\cov}{\text{Cov}}
\newcommand{\rk}{\mathrm{rank}}

\newcommand{\dig}{\mathtt{diag}}

\begin{document}

\title{The Control Plant as A Communication Channel: Implicit Communication for Decentralized LQG Control}

\author{Gongpu Chen,~\IEEEmembership{Member,~IEEE} and Deniz Gündüz,~\IEEEmembership{Fellow,~IEEE}
	\thanks{The authors are with the Department of Electrical and Electronic Engineering, Imperial College London, London SW7 2AZ, UK (e-mails:\{gongpu.chen, d.gunduz\}@imperial.ac.uk).}
}



\maketitle

\begin{abstract}
We study a decentralized linear quadratic Gaussian control problem, in which a leader and a follower must steer a linear system to a target state. The target state is known only to the leader, and no explicit communication channel exists between the agents. To address the challenge posed by this asymmetric information structure, we propose an integrated communication and control (ICoCo) framework in which the control plant itself serves as a communication channel: the leader encodes the target state into its control input through an additive communication term, and the follower decodes it from the resulting state trajectory. We design an implicit coordination scheme based on joint source-channel coding ideas, and prove that the follower’s estimation error decreases monotonically to zero, enabling the two agents to coordinate increasingly well and ultimately steer the system to the target state. We then formulate the design of the communication power as an optimal control problem to minimize the overall control cost. In the fully actuated leader case, we derive necessary optimality conditions and in the under-actuated case, we solve the problem numerically. Numerical results show that the proposed scheme effectively coordinates the two agents and achieves a control cost close to that of the explicit-communication lower bound.
\end{abstract}

\begin{IEEEkeywords}
Implicit communication, decentralized control, LQG, multi-agent coordination, joint source-channel coding
\end{IEEEkeywords}

\section{Introduction}

\IEEEPARstart{T}{his} paper studies a decentralized linear quadratic Gaussian (LQG) control problem with asymmetric information among controllers. Specifically, we consider a setting in which a leader agent and a follower agent must coordinate to drive a linear system to a target state. Both agents observe the system state perfectly at each time step. The information asymmetry arises from the fact that the target state is known only to the leader and not to the follower. Moreover, no communication channel is available between the two agents, so the leader cannot directly transmit the target state to the follower over the horizon. Under these constraints, \textit{can the two agents still coordinate to achieve the target state with minimal cost? }

Although simple, this setting is of both practical and theoretical interest. Practically, it arises in many real-world scenarios where explicit communication is unavailable, for example due to harsh environments~\cite{harshEnv}, adversarial jamming~\cite{jamming}, or energy constraints~\cite{labidi2026joint}. Theoretically, this problem represents an interesting instance of decentralized control in which the information asymmetry lies primarily in the task-relevant information—a special information pattern for which the message to be conveyed is explicit and time-invariant. This point will be further elaborated on later.

Decentralized coordination without explicit communication channels is common in nature. A representative example is starling murmuration \cite{ballerini2008interaction}, where birds do not rely on verbal or other explicit communication channels; instead, they exchange information through their motion trajectories. By observing the movements of nearby neighbors, each bird coordinates its behavior with the flock. Similar phenomena arise in fish schooling \cite{katz2011inferring}, honeybee waggle dances \cite{seeley1999honeybee}, and firefly synchronization \cite{sarfati2021firefly}. The key feature underlying these phenomena is that animals communicate implicitly through their actions or motion trajectories in order to coordinate with one another. 

 Inspired by such natural forms of coordination, we develop an \textit{integrated communication and control (ICoCo)} framework based on the concept of implicit communication \cite{chen2025implicitcommunication}, also known as communication through actions \cite{Act2comm}. Specifically, the key idea is that \textit{a control plant can itself serve as a communication channel between a controller and a receiver that also observes the system states}: the controller (i.e., transmitter) encodes information into its control inputs by deliberately deviating from its nominally optimal policy, and the receiver decodes it from the resulting state evolution. In our setting, the leader seeks to convey the target state to the follower so that they can coordinate to drive the system to that target. Since we assume no explicit communication channel between the leader and the follower, implicit communication provides a natural solution.

The key challenge in coordinating the two agents through implicit communication is that the leader’s control input serves the dual roles of communication and control. On the one hand, the leader must transmit the target state to the follower through its control inputs; on the other hand, both agents ultimately aim to drive the system to the target state with minimal cost, which requires the leader to regulate the system appropriately. This dual role makes the design of the leader’s control input highly nontrivial.

Fortunately, thanks to the special information pattern of our problem, the message that must be transmitted to optimize the control objective is explicit,  thereby enabling the implicit communication to be designed explicitly using joint source-channel coding (JSCC)~\cite{gunduz2024joint}. In addition, \cite{chen2025implicitcommunication} establishes a separation principle for implicit communication in LQG systems: the optimal input policy balancing communication and control is given by the sum of the optimal control policy and a Gaussian signaling term. The target state is then transmitted through this signaling term. Building on this insight, we propose an implicit coordination algorithm for the two controllers. In this algorithm, the leader gradually refines the follower’s estimate of the target state via the additive Gaussian signaling term, while the follower computes its control input based on that estimate. As the estimate becomes more accurate, the follower’s control policy converges to the optimal policy for driving the system to the target state. We prove that, under the proposed algorithm, the follower’s estimation error covariance decreases monotonically to zero as time goes to infinity, provided that the covariance matrix of the signaling term is positive definite (PD) at every time step. Consequently, the follower is able to coordinate with the leader increasingly well over time and eventually steer the system to the target state. 

Our implicit coordination algorithm leaves freedom in the design of the communication power, determined by the covariance matrix of the signaling term, which is the key variable governing the tradeoff between communication and control. On the one hand, increasing the communication power raises the control cost incurred by the leader. On the other hand, greater communication power can accelerate the reduction of the follower’s estimation error, which may in turn lower the overall control cost. Since our ultimate objective is to minimize the total control cost, we formulate the communication power design problem as an optimal control problem, namely, a deterministic Markov decision process (MDP). When the leader is fully actuated (i.e., when the system is controllable by the leader alone), we derive necessary conditions for optimality under a mild assumption. When the leader is under-actuated, however, such necessary conditions are unavailable. Nevertheless, the resulting optimization problem can still be solved numerically. 

The main contributions of this paper are as follows: 
\begin{itemize}
	\item [1.] We propose an ICoCo framework in which the control plant serves as the communication channel between decentralized controllers. 
	\item [2.] We develop an implicit coordination algorithm and establish theoretical guarantees showing that the follower’s estimation error covariance decreases monotonically to zero,  provided that the signaling covariance remains positive definite, in both fully-actuated and under-actuated cases.
	\item [3.] We formulate the communication power allocation problem as an MDP, derive necessary optimality conditions in the fully-actuated case, and provide a numerical solution in the under-actuated case.
\end{itemize}
Our Simulations demonstrate that the proposed implicit communication-based scheme achieves a control cost close to that of the explicit-communication lower bound. These results highlight the potential of implicit communication as a viable mechanism for decentralized coordination in the absence of explicit communication channels.

The rest of this paper is organized as follows. Section~\ref{sec: related} discusses the related work. Section~\ref{sec:problem} presents the system model and the problem formulation based on implicit communication. Section~\ref{sec:full} develops the implicit coordination algorithm for the fully actuated leader setting, establishes the convergence guarantee for the follower's estimation error, and addresses the communication power optimization problem. Section~\ref{sec:under} extends the algorithm and theoretical result to the under-actuated leader setting. Section~\ref{sec:exp} demonstrates the experimental results. Finally, Section~\ref{sec:conclusion} concludes this paper.

\section{Related Work} \label{sec: related}
Communication and control have traditionally been treated as separate problems. In practical control systems, however, communication is not an end in itself, but a means of achieving a control objective. Consequently, communication and control are inherently coupled. 
This observation has inspired growing interest across multiple disciplines in the co-design of communication and control. In the control community, the concept of cyber-physical system has highlighted the tight integration of communication and control, motivating extensive research on their co-design \cite{TII2021,Guan2024TII,zhang2006communication}. Meanwhile, goal-oriented communication has attracted significant interest in the communication community, where communication schemes are designed to optimize the downstream task directly rather than conventional metrics such as delay or reliability \cite{Deniz2023BITS,Petar2025Goal,di2023goal}, thereby further promoting an end-to-end co-design paradigm. 

Most existing co-design frameworks, however, still rely on an explicit communication channel, such as a wireless channel, that is external to and independent of the control system. By contrast, the ICoCo framework proposed in this paper enables information transmission that improves the control objective without invoking an external channel. Instead, the control plant itself serves as the communication channel.  In this sense, it represents a deeper integration of communication and control.


Indeed, the idea of implicit communication through the control plant can be traced back to Witsenhausen's counterexample \cite{witsenhausen1968counterexample} introduced in 1968. In this seminal work, Witsenhausen formulated a decentralized LQG control problem with asymmetric information between two controllers and showed the surprising result that linear policies are not optimal. Rather, the first controller can apply a nonlinear policy that helps the second controller better infer the system state from its noisy observation, thereby reducing the overall cost. Although not explicitly described in Witsenhausen's original paper, this phenomenon was later interpreted as implicit communication from controller~1 to controller~2 \cite{grover2010implicit,Basar1987Team,H01979Teams,Mengyuan2025}. Subsequent studies have shown that, even for this simple problem, designing policies that properly balance control and communication remains highly nontrivial \cite{grover2010actions,Grover2013TAC,ouyang2022signaling,mengyuan2024}.

Information patterns are fundamental in decentralized control, as they specify what information is available to each controller at each time and therefore determine what information must be exchanged to optimize the control objective. Unfortunately, for many information patterns, the information that must be transmitted is itself unclear or implicit, as exemplified by Witsenhausen’s counterexample. As a result, decentralized control can remain highly challenging even in the presence of an explicit communication channel, when that channel is constrained by limited rate~\cite{grover2010implicit}, noise~\cite{farhadi2011suboptimal}, or delay~\cite{Matni2013}.  In this paper, we focus on a special information pattern for which the information to be transmitted is explicit. This leads to an interesting and more tractable class of decentralized control problems than the general case, where both the relevant information and the communication channel are implicit.


\section{Problem Statement} \label{sec:problem}
\subsection{System Model}
Consider the following discrete-time linear system, jointly controlled by two agents\footnote{For ease of exposition, our treatment is presented for two control agents. The extension to general cases is straightforward.}:
\begin{align}  \label{eq:system-0}
	x_{t+1} = Ax_t + B_1 v_{t} + B_2 q_{t} + w_t , \ t=0,1,2,\cdots,
\end{align}
where $x_t\in \mathbb{R}^{d_0}$ is the system state, $v_{t}\in \mathbb{R}^{d_1}$ and $q_{t}\in \mathbb{R}^{d_2}$ are the control inputs of the two agents, respectively, and $w_t\in \mathbb{R}^{d_0}$ is a zero-mean Gaussian noise with covariance $W\succ 0$. The initial state $x_0$ is a random variable, following a Gaussian distribution $\mathcal{N}(0,X_0)$. We refer to the agent controlling $v_t$ as the \textit{leader} and the agent controlling $q_t$ as the \textit{follower}. Both agents are assumed to have perfect, noise-free observations of the system state $x_t$ at each time step. 

The goal is to control the system toward a given target state $x_*$. \textit{The central assumption} of this problem is as follows: the target state, which follows a Gaussian distribution $\mathcal{N}(0,\Sigma_0)$, is revealed only to the leader at the initial time $t=0$. Moreover, there is no dedicated communication channel between the leader and the follower, meaning that the follower receives no explicit information about the target state throughout the time horizon. The objective is to design optimal control policies for both agents to minimize the following quadratic cost over a time horizon of length $n$:
\begin{align} \label{eq:Jn}
	\min_{\{{v}_t,q_t\}} \ J_n\triangleq \mathbb{E}\left[ \sum_{t=0}^{n-1} \left(z_t^\top Fz_t + {v}^\top_tG_1{v}_t + {q}^\top_tG_2{q}_t \right) + z_n^\top F_nz_n \right],
\end{align}
where $F,G_1,G_2$, and $F_n$ are predefined positive semi-definite (PSD) matrices, $z_t\triangleq x_t - x_*$.

Since both agents have noiseless observations of the system state, we can define the following augmented quantities:
\begin{align*}
	 B = \begin{bmatrix}
		B_1 &B_2
	\end{bmatrix}, {u}_t = \begin{bmatrix}
		v_{t} \\
		q_{t}
	\end{bmatrix}.
\end{align*}
Then the system defined in \eqref{eq:system-0} is equivalent to the following single-agent system:
\begin{align}  \label{eq:system-1}
	x_{t+1} = Ax_t + B {u}_t + w_t.  
\end{align}
We make the usual assumption that $(A,B)$ is controllable. Accordingly, the objective \eqref{eq:Jn} can be written as:
\begin{align} 
	\min_{\{{u}_t\}} \ J_n\triangleq \mathbb{E}\left[ \sum_{t=0}^{n-1} \left(z_t^\top Fz_t + {u}^\top_tG{u}_t\right) + z_n^\top F_n z_n \right],
\end{align}
 where $G=\mathtt{diag}(G_1,G_2)$ is a block diagonal matrix, with two PSD blocks on the diagonal: $G_1\in \mathbb{R}^{d_1 \times d_1}$ and $G_2\in \mathbb{R}^{d_2 \times d_2}$. Throughout the remainder of this paper, we interchangeably use the two-agent system representation \eqref{eq:system-0} and the single-agent system representation \eqref{eq:system-1},  depending on which is more convenient for analysis or presentation.

If the target state $x_*$ is known to both agents at the initial time $t=0$, then the problem reduces to an ordinary LQG control problem, and the optimal policy is simply given by
\begin{align*}
	{u}_t =  -K_tx_t + D_t x_*,
\end{align*}
where $K_t$ is the feedback gain, and $D_t$ is a constant matrix independent of $x_t$, given by (see, e.g., \cite{anderson2007optimal})
\begin{align*}
	K_t &= (G+B^\top \Phi_{t+1}B)^{-1}B^\top \Phi_{t+1} A, \\
	D_t &= (G+B^\top \Phi_{t+1}B)^{-1}B^\top \bar{D}_t.
\end{align*}
Here, $\Phi_{n} = \bar{D}_n = F_n$. For each $0\le t \le n-1$, $\Phi_t$ is determined by the Riccati recursion:
\begin{align*}
	\Phi_t = F + A^\top \Phi_{t+1}A - A^\top \Phi_{t+1}B(G+B^\top \Phi_{t+1}B)^{-1}B^\top \Phi_{t+1}A,
\end{align*} 
while $\bar{D}_t$ is determined recursively by $\bar{D}_t = (A - BK_t)^\top \bar{D}_{t+1} + F$.

Given a linear policy for the single-agent system of the from $u_t=-K_tx_t + D_tx_*$, the corresponding control inputs $v_{t}$ and $q_{t}$ for the two-agent system can be obtained by decomposing the feedback gain $K_t$ and the offset gain $D_t$ as follows:
\begin{align*}
	{u}_t = \begin{bmatrix}
		v_{t} \\
		q_{t}
	\end{bmatrix} = -K_t x_t + D_tx_* = - \begin{bmatrix}
		K^l_{t} \\
		K^f_{t}
	\end{bmatrix} x_t + \begin{bmatrix}
		D^l_{t} \\
		D^f_{t}
	\end{bmatrix} x_*= \begin{bmatrix}
		-K^{l}_{t} x_t + D^l_{t} x_* \\
		-K^f_{t} x_t + D^f_{t} x_*
	\end{bmatrix}.
\end{align*}
Note that $K_t$ and $D_t$ are invariant to the target state $x_*$.
Hence, in our problem---where $x_*$ is unknown to the follower---the key challenge in this control task is that the follower can not compute its optimal offset $D^f_t x_*$. This naturally raises a question: \textit{is it possible for the follower to estimate $x_*$ from the observed state sequence?} In this paper, we affirmatively answer this question by proposing an implicit communication scheme that enables the leader to convey information about $x_*$ through its control inputs.

\subsection{Coordination via Implicit Communication}
The key idea behind implicit communication \cite{chen2025implicitcommunication} is that the leader encodes information about the target state $x_*$ into its control inputs by deliberately deviating from its optimal control policy. This enables the follower to gradually refine its estimate of $x_*$ based on the observed state trajectory. In this section, we illustrate the fundamental concept of implicit communication by showing how the control plant itself can serve as a communication channel from the leader to the follower. The detailed communication scheme will be presented in the following sections. 

We begin by defining the implicit communication channel from the leader to the follower. In particular, the leader's control input $v_t$ acts as the channel input at time $t$, and the corresponding channel output is defined as
\begin{align}  \label{eq:def-yt1}
	y'_t \triangleq x_{t+1} - Ax_t -B_2 q_t.
\end{align}
This output $y'_t$ can be computed by the follower upon observing the new state $x_{t+1}$. This construction induces a Gaussian MIMO channel~\cite{gallager1968information} from the leader (the transmitter) to the follower (the receiver), characterized by
\begin{align}  \label{eq:yt1}
	y'_t = B_1 v_t + w_t.
\end{align}
Here, $v_t$ is the channel input, $y'_t$ is the channel output, and $w_t$ is the channel noise. The resulting implicit communication channel is illustrated in Fig.~\ref{fig:channel}.

\begin{figure}[t]
	\centering
	\includegraphics[width=0.8\textwidth]{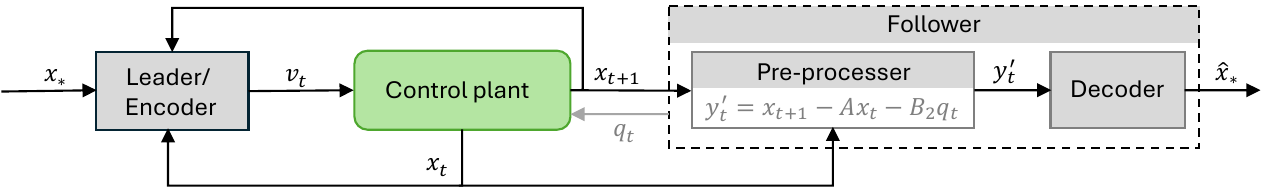}
	\caption{The control plant as a communication channel from the leader to the follower.} 
	\label{fig:channel}
\end{figure}

Communication over Gaussian MIMO channels is a well-studied problem, and many channel coding schemes are known to be asymptotically optimal in terms of communication rate~\cite{richardson2008modern}. The key challenge that distinguishes our implicit communication problem from classical communication settings is that the channel input $v_t$ is not solely a communication signal, but also a control input. Consequently, $v_t$ must be designed in a way that simultaneously accounts for control performance and communication efficiency.
This makes implicit communication an inherently integrated communication and control problem, which is significantly more challenging than classical communication problems.

Fortunately, as shown in \cite{chen2025implicitcommunication}, implicit communication in LQG control systems with noiseless observations at both the transmitter and the receiver admits a \textit{separation principle}: the optimal channel input policy that balances communication and control can be decomposed into independent control and communication components. Specifically, recall that the transmitter’s optimal control input is given by $v^*_t = -K^{l}_{t} x_t + D^l_t x_*$, then the optimal channel input takes the form 
\begin{align} \label{eq:channel-vt}
	v_t = v^*_t + s_t =  -K^{l}_{t} x_t + D^l_t x_*+ s_t,
\end{align}
where $s_t$ is a zero-mean Gaussian communication signal, independent of $v^*_t$. Under this structure, the message to be transmitted is encoded entirely into $s_t$. From a control perspective, $s_t$ can be viewed as an artificial disturbance that degrades control performance. Therefore, the fundamental tradeoff between control and communication is governed by the power of the communication signal, characterized by the covariance matrix of $s_t$:  increasing this power improves the signal-to-noise ratio (SNR) and thereby enhances communication performance, but at the expense of control performance.

Motivated by the separation principle, it is natural for the leader to transmit the target state $x_*$ to the follower using an input of the form \eqref{eq:channel-vt}.
However, the implicit communication problem studied in \cite{chen2025implicitcommunication} assumes that the message to be transmitted is a uniformly distributed  and discrete random variable that is independent of the control task. This assumption ensures that the communication signal $s_t$ is independent of the transmitter's optimal control input $v^*_t$. Our setting differs in two important aspects: First, the message $x_*$ follows a Gaussian distribution rather than a uniform one. Second, $x_*$ is not independent of the control task; instead, it directly influences the leader's optimal control input through the offset term $D^l_t x_*$. These differences change the structure of the implicit communication problem and require a tailored communication strategy.

The first difference implies that source coding (i.e., compression) may be needed to handle the non-uniform distribution of the message, transforming the problem into one of joint source-channel coding (JSCC). Fortunately, as we will show in the following sections, when both the message and the channel noise are Gaussian, this JSCC problem admits a relatively simple and elegant solution.
 To address the second challenge, we propose replacing the leader's offset term $D^l_t x_*$ with a sub-optimal offset $c^l_t$, which is constructed in a way that is known to both the leader and the follower. As a result, we redefine the channel output using all information available at the follower:
\begin{align} \label{eq:output-yt}
	y_t \triangleq y'_t + B_1(K_t^lx_t - c^l_t) = B_1s_t + w_t. 
\end{align}

In summary, to enable implicit coordination between the two agents for optimizing the control task, we consider control inputs of the form
\begin{align} \label{eq:inputs-1}
	v_t = -K^{l}_{t} x_t + c^l_t + s_t, \ q_t = -K_t^f x_t + c^f_t,
\end{align}
where $c_t = [c^l_t, c^f_t]$ is determined by common information available to both the leader and the follower. By encoding information about the target state $x_*$ into the communication signal $s_t$, the follower can estimate $x_*$ from the output sequence $\{y_t\}$. As the follower's estimation error decreases over time, the offset $c_t$ converges toward its optimal value, enabling the two agents to effectively coordinate in steering the system toward the target state. The goal is to design a policy that determines the communication signal $s_t$  so as to minimize the overall control cost defined in \eqref{eq:Jn}.

Before delving into the design and optimization of the implicit communication scheme, we first present the following definition, which is important from both the control and communication perspectives.
\begin{definition}
	Consider the linear system \eqref{eq:system-0}. The leader is said to be \textit{fully actuated} if $\rk(B_1)= d_0$; otherwise, it is called \textit{under-actuated}.
\end{definition}

From a control perspective, $\rk(B_1)= d_0$ implies that $(A,B_1)$ is controllable, meaning the leader alone can drive the system to any desired target state without assistance from the follower. In contrast, if $\rk(B_1) < d_0$, the leader lacks full control authority, and achieving a general target state typically requires coordinated effort from both agents. That said, even when the leader is fully actuated, coordination with the follower may still reduce the overall control cost. This will be verified in our experiments.

From a communication perspective, $B_1$ is the channel gain of the implicit channel defined in \eqref{eq:output-yt}. If the rank of $B_1$ matches the dimension of the message $x_*$, the communication scheme can be relatively simple and elegant; otherwise, the channel becomes rank-deficient, and not all dimensions of the message can be transmitted directly within a single channel use. In this case, a more complicated communication scheme is needed to handle the dimensional mismatch. 

In the following sections, we first propose an implicit coordination scheme for the fully actuated setting, and then extend it to handle the under-actuated setting.

\section{Fully Actuated Leader} \label{sec:full}
This section presents an algorithm for coordinating the two agents via implicit communication in the fully actuated setting. Under this algorithm, the follower progressively refines its estimate of the target state $x_*$ from the observed state trajectory. We prove that the estimation error at the follower decreases over time and converges to $0$ as time $t\to \infty$, provided that the communication signal maintains a positive definite covariance matrix at every time step. Building on this result, we optimize the communication power to minimize the overall control cost. Under a mild assumption on the terminal estimation error covariance, we derive necessary conditions for the optimal communication power allocation.

\subsection{Implicit Coordination Scheme} \label{subsec:coding}
We consider control inputs of the form given in \eqref{eq:inputs-1}, which reduces to the optimal control inputs if $s_t=0$ and $c_t = D_t x_*$. Since the follower does not have access to the target state $x_*$, it is natural to set $c_t^f = D_t^f x_*^{(t)}$, where $x_*^{(t)}$ denotes the follower's estimate of  $x_*$ at the beginning of time $t$ (i.e., the end of time $t-1$ in continuous time). The objective is to design a scheme that determines both $s_t$ and $x_*^{(t)}$ such that $s_t$ goes to zero and $x_*^{(t)}$ converges to $x_*$ as time progresses. 

Since $x_*^{(t)}$ must be a function of the state sequence $x_0,x_1,\dots,x_{t}$, the leader can also compute $x_*^{(t)}$ from the state sequence. Consequently, the channel output $y_t$ is available to both the leader and the follower at each time step, meaning that the channel \eqref{eq:output-yt} operates with perfect feedback \cite{Sanjoy2009feedback}. Our method is inspired by Elias' scheme  \cite{elias1956channel,Gallager-SKscheme}, which was originally proposed for transmitting a scalar Gaussian source over a scalar Gaussian channel with feedback.

Note that $s_t\in \mathbb{R}^{d_1}$, $x_*\in \mathbb{R}^{d_0}$, and $B_1\in \mathbb{R}^{d_0\times d_1}$. By definition, the leader is fully actuated if $\rk(B_1)=d_0$, implying that $d_1\ge d_0$. At the initial time step $t=0$, we set the communication signal $s_0$ as a linear function of the message $x_*$:
\begin{align*}
	s_0 =Q S_0^{\frac{1}{2}} \Sigma_{0}^{-\frac{1}{2}} x_*,
\end{align*}
where $S_0\in \mathbb{R}^{d_0 \times d_0}$ is a PD matrix and $Q\in \mathbb{R}^{d_1\times d_0}$ is a projection matrix that maps a $d_0$-dimensional signal into a $d_1$-dimensional space. To ensure that this linear transformation preserves all information about $x_*$, we require $Q$ to have full rank. As a result, $Q$ admits a Moore–Penrose inverse $Q^\dagger = (Q^\top Q)^{-1} Q^\top$, which satisfies $Q^\dagger Q=I$. In practice, a simple choice of $Q$ is  $Q=I$ when $d_0=d_1$ and $Q=B_1^\top$ when $d_0<d_1$. Under this construction, the covariance of $s_0$ is given by
\begin{align*}
	\cov(s_0) = Q S_0 Q^\top.
\end{align*}

Now that the communication signal has been determined, the next step is to define the control inputs of the two agents. At the initial time step $t=0$, the follower has no prior information about $x_*$ beyond knowing that it is a zero-mean Gaussian random variable. Therefore, the follower's estimate of $x_*$ at time $t=0$ is $x_*^{(0)}=\mathbb{E}[x_*]=0$. Based on this estimate, we set the follower's control input as:
\begin{align*}
	q_0 = -K_0^fx_0 + D_0^f x_*^{(0)} = -K_0^fx_0. 
\end{align*}
As for the leader's offset term, instead of applying the optimal value $D_0^l x_*$, we propose using $c_0^l = D_0^l x_*^{(0)}=0$, which can also be computed by the follower. Therefore, the leader's control input is given by
\begin{align*}
	v_0 = -K_0^l x_0 + s_0.
\end{align*}
The motivation for this choice will be discussed later (see Remark 1). Now, upon applying $v_0$ and $q_0$, the system transitions to a new state $x_1$. As discussed around \eqref{eq:output-yt}, both agents can compute the implicit channel output as:
\begin{align*}
	y_0  = x_1 - (A-BK_0)x_0 = B_1s_0 + w_0.
\end{align*}
This channel output $y_t$ enables the follower to estimate $x_*$. In particular, the minimum mean-square error (MMSE) estimate of $x_*$ is given by
\begin{align*}
	 \hat{e}_0 \triangleq \mathbb{E}[x_*|y_0] = \Sigma_{0}^{\frac{1}{2}}S_0^{-\frac{1}{2}} Q^\dagger \mathbb{E}[s_0|{y}_0] .
\end{align*}
Since $w_0$ is Gaussian and independent of $s_0$, we have
\begin{align*}
	\mathbb{E}[s_0|{y}_0] & = \mathbb{E}[s_0] + \cov(s_0, {y}_0)B_1^\top \cov({y}_0)^{-1}({y}_0 - \mathbb{E}[{y}_0]) \\
	& = \cov(s_0)B_1^\top [B_1\cov(s_0)B_1^\top  + \cov({w}_0)]^{-1}{y}_0.
\end{align*}
It follows that the follower's estimate of $x_*$ at the beginning of time $t=1$ is
\begin{align} \label{eq:hate0}
	x_*^{(1)} = \hat{e}_{0} =  \Sigma_{0}^{\frac{1}{2}} S_0^{\frac{1}{2}} Q^\top B_1^\top (B_1QS_0 Q^\top B_1^\top + W)^{-1}  y_0.
\end{align}
Since the leader can also compute $y_0$ using $x_0$ and $x_1$, it knows exactly the follower's estimate of $x_*$ at the beginning of time $t=1$. The leader can then compute the follower's estimation error as:
\begin{align*}
	e_1 \triangleq x_* - \hat{e}_0.
\end{align*}
It is easy to verify that $e_1$ is a zero-mean Gaussian random variable. We defer the derivation of its covariance, denoted by $\Sigma_1 = \cov(e_1)$, to Lemma~\ref{lem:estimation}.

At the next time step (i.e., $t=1$), the leader treats $e_1$ as the new message and aims to transmit $e_1$ to refine the follower's estimate of $x_*$. Following the same procedure as before, the follower computes $y_1$ from $x_2$ and $x_1$, and then forms the MMSE estimate $\hat{e}_1 = \mathbb{E}[e_1|y_1]$. Based on this result,  the follower's estimate of $x_*$ can be updated accordingly:
\begin{align*}
	x^{(2)}_* = \hat{e}_0 + \hat{e}_1.
\end{align*}
At the beginning of time $t=2$, the leader again computes the last step's estimation error $e_2 = e_1 - \hat{e}_1$, treats it as the new message, and transmits it to further refine the follower's estimate. Note that $e_2$ corresponds to the follower's current estimation error of $x_*$:
\begin{align*}
	e_2 = x_* - \hat{e}_0 - \hat{e}_1 = x_* - x^{(2)}_*.
\end{align*}
This procedure is repeated until the last control time $t=n-1$. 

In general, for $1\le t\le n-1$, the communication signal is constructed as a linear function of the message $e_t\sim \mathcal{N}(0,\Sigma_t)$:
\begin{align}
	s_t =Q S_t^{\frac{1}{2}} \Sigma_{t}^{-\frac{1}{2}} e_{t},
\end{align}
where $Q$ is a predefined projection matrix, and $S_t$ is a PD matrix that determines the power of the communication signal and will be optimized.
The two agents then compute their control inputs based on the follower's current estimate $x^{(t)}_*$:
\begin{align}
	v_t = -K_t^l x_t + D_t^l x^{(t)}_* + s_t, \ q_t = -K_t^fx_t + D_t^f x_*^{(t)}. 
\end{align}
Upon observing $x_{t+1}$, the implicit channel output $y_t$ is computed as
\begin{align}
	y_t = x_{t+1} - (A-BK_t)x_t - BD_t x_*^{(t)}.
\end{align}
Using a similar argument as in \eqref{eq:hate0}, the MMSE estimate of $e_t$ given $y_t$ is 
\begin{align} \label{eq:hat-et}
	\hat{e}_{t} = \mathbb{E}[e_t|y_t] =  \Sigma_{t}^{\frac{1}{2}} S_t^{\frac{1}{2}} Q^\top B_1^\top (B_1QS_t Q^\top B_1^\top + W)^{-1}  y_t.
\end{align}
At the beginning of time $t+1$, the follower updates its estimate of the target state $x_*$: 
\begin{align}
	x^{(t+1)}_* = x^{(t)}_* + \hat{e}_t = \sum_{i=0}^{t} \hat{e}_i.
\end{align}
The corresponding estimation error is updated as $e_{t+1} = x_* - x^{(t+1)}_* = e_t - \hat{e}_t$, which becomes the message to be transmitted at time $t+1$. It can be verified that $e_{t+1}$ remains a zero-mean Gaussian random variable if $e_t$ is zero-mean Gaussian. The complete procedure is summarized in Algorithm~\ref{alg:1}, referred to as implicit coordination scheme.

\begin{algorithm}[!t]
	\caption{Implicit Coordination Scheme (fully actuated leader)}
	\label{alg:1}
	\SetAlgoLined
	\KwIn{target state $x_*$, target state covariance $\Sigma_0$, horizon length $n$, matrix sequence $\{S_t\}$}
	
	\textbf{Initialization:} $c_0 = [c_0^l, c_0^f] = 0, e_{0} = x_*, x_*^{(0)}=0$, projection matrix $Q$
	
	\For{$t = 0$ \KwTo $n-1$}{
		
		Compute the communication signal:\
		\[ 
		s_t =Q S_t^{\frac{1}{2}} \Sigma_{t}^{-\frac{1}{2}} e_{t}
		\] \\
		Compute the offset:
		$$c_{t}=[c^l_{t}, c^f_{t}] = D_{t}x_*^{(t)}$$ \\
		Compute the control inputs:\
		\[ 
		v_t =-K^l_t x_t  + c^l_t + s_t, \ q_t = -K_t^f x_t + c^f_t
		\] \\
		The leader inputs $v_t$  and the follower inputs $q_t$; both agents observe the new state $x_{t+1}$ \\
		Compute the channel output:\
		$$y_{t}=x_{t+1}- (A-BK_t)x_t - Bc_t$$ \\
		Estimate $e_t$: 
		$$\hat{e}_{t} =  \Sigma_{t}^{\frac{1}{2}} S_t^{\frac{1}{2}} Q^\top B_1^\top (B_1QS_t Q^\top B_1^\top + W)^{-1}  y_t$$ \\
		Update the estimation error and the corresponding error covariance:\
		$$e_{t+1} = e_t - \hat{e}_t, \ \Sigma_{t+1} = \Sigma_{t}^{\frac{1}{2}} V_t \Sigma_{t}^{\frac{1}{2}}$$ \\
		Update the target state estimate $x_*^{(t+1)} = x_*^{(t)} + \hat{e}_{t} $.

	}
	
\end{algorithm}

To implement Algorithm~\ref{alg:1}, we need to compute the estimation error covariance $\Sigma_t=\cov(e_t)$ at each time step, as it is required both for constructing the communication signal $s_t$ and computing the estimate $\hat{e}_t$. In principle, $S_t$ can be chosen as any PD matrix. Given this choice and using equation \eqref{eq:hat-et}, $\Sigma_{t+1}$ can be derived as a function of $\Sigma_t$ and $S_t$. However, allowing a general form of $S_t$ leads to a complicated expression for $\Sigma_{t+1}$, which poses challenges not only for computational efficiency but also for subsequent analysis and optimization. This motivates us to restrict $S_t$ to a more structured form that simplifies the expression of $\Sigma_{t+1}$.

Since $Q$ is a predefined matrix, the implicit communication channel \eqref{eq:output-yt} can be equivalently written as
\begin{align} \label{eq:yt-eqv}
	y_t = B_1Q \tilde{s}_t + w_t,
\end{align}
where $\tilde{s}_t = S_t^{\frac{1}{2}} \Sigma_{t}^{-\frac{1}{2}} e_{t}$ is the input to this equivalent channel. Then it is easy to see that $S_t$ corresponds to the covariance of $\tilde{s}_t$. The channel defined in \eqref{eq:yt-eqv} is a Gaussian MIMO channel with channel gain matrix $B_1Q$, it is known in information theory that the capacity-achieving input covariance $S_t$ is simultaneously unitarily diagonalizable with $(B_1Q)^\top W^{-1}B_1Q$ (see, e.g., \cite{telatar1999capacity}). Although our goal is to minimize the overall control cost rather than maximize the communication rate, adopting this structure remains a natural and effective choice. In practice, it leads to a greatly simplified expression for the estimation error covariance, which is crucial for both analysis and optimization.

Specifically, since $W$ is PD, the matrix $(B_1Q)^\top W^{-1}B_1Q$ must be PSD and thus admit a diagonal decomposition. Without loss of generality, assume $(B_1Q)^\top W^{-1}B_1Q=UH U^\top$, where $U$ is a unitary matrix and $H$ is a diagonal matrix. We restrict $S_t$ to have the form $S_t = U \Lambda_t U^\top$, where $\Lambda_t$ is a diagonal matrix. This structural assumption allows us to compute the estimation error covariance in a simple, recursive manner, as established in the following lemma.

\begin{lemma}  \label{lem:estimation}
	Given $(B_1Q)^\top W^{-1}B_1Q=UH U^\top$ and $S_t = U\Lambda_t U^\top$. Let $\Lambda_t(i)$ and $H(i)$ denote the $i$-th diagonal element of $\Lambda_t$ and $H$, respectively. Then the estimation error process $\{e_t:t\ge 0\}$ evolves according to the Markov recursion
	\begin{align*}
		e_{t+1} = \Sigma_{t}^{\frac{1}{2}} V_t  \Sigma_{t}^{-\frac{1}{2}} e_{t} - \Sigma_{t}^{\frac{1}{2}} V_t S_t^{\frac{1}{2}} Q^\top B_1^\top  W^{-1} w_t,
	\end{align*}
	where $V_t = U\hat{V}_t U^\top$, and $\hat{V}_t = (I + \Lambda_tH)^{-1}$ is a diagonal matrix, with the $i$-th diagonal entry given by
	\begin{align*}
		\hat{V}_t(i) = \frac{1}{1 + \Lambda_t(i) H(i)}.
	\end{align*}
	Moreover, $e_{t+1}\sim \mathcal{N}(0,\Sigma_{t+1})$, where $\Sigma_{t+1} = \Sigma_{t}^{\frac{1}{2}} V_t \Sigma_{t}^{\frac{1}{2}} $.
\end{lemma}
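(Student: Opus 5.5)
Substitute the channel output into the estimator and simplify with the matrix inversion lemma, using the simultaneous diagonalization $S_t=U\Lambda_tU^\top$, $(B_1Q)^\top W^{-1}B_1Q=UHU^\top$.

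\emph{Step 1: the error recursion.} By \eqref{eq:output-yt} and the definition of $s_t$, the channel output is $y_t=B_1QS_t^{\frac12}\Sigma_t^{-\frac12}e_t+w_t$. Write $M=B_1Q$, so $\hat e_t=\Sigma_t^{\frac12}S_t^{\frac12}M^\top(MS_tM^\top+W)^{-1}y_t$ by \eqref{eq:hat-et}. The push-through identity gives
\[
M^\top(MS_tM^\top+W)^{-1}=(I+M^\top W^{-1}MS_t)^{-1}M^\top W^{-1}.
\]
Since $S_t$ and $M^\top W^{-1}M$ commute and $U$ is orthogonal, $S_t^{\frac12}=U\Lambda_t^{\frac12}U^\top$. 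Hence
\[
S_t^{\frac12}(I+UHU^\top U\Lambda_tU^\top)^{-1}=U\Lambda_t^{\frac12}(I+H\Lambda_t)^{-1}U^\top=V_tS_t^{\frac12},
\]
because all the diagonal factors commute. It follows that $\hat e_t=\Sigma_t^{\frac12}V_tS_t^{\frac12}M^\top W^{-1}y_t$. Substituting $y_t$ and using
\[
S_t^{\frac12}M^\top W^{-1}MS_t^{\frac12}=U\Lambda_tHU^\top
\]
yields
\[
e_t-\hat e_t=\Sigma_t^{\frac12}\bigl(I-U\hat V_t\Lambda_tHU^\top\bigr)\Sigma_t^{-\frac12}e_t-\Sigma_t^{\frac12}V_tS_t^{\frac12}M^\top W^{-1}w_t .
\]
The identity $I-(I+\Lambda_tH)^{-1}\Lambda_tH=(I+\Lambda_tH)^{-1}=\hat V_t$ then gives the stated recursion. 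The process is Markov because $w_t$ is independent of $e_t$, which is a function of $x_*$ and $w_0,\dots,w_{t-1}$, and of past $e$'s. The only step needing care is tracking commutation. Everything reduces to diagonal matrices once $U$ is factored out, and $\Sigma_t^{\pm\frac12}$ never needs to commute with anything, since it appears only at the outer ends.

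\emph{Step 2: distribution of $e_{t+1}$.} By induction, $e_t\sim\mathcal N(0,\Sigma_t)$ with $e_0=x_*$. Then $e_{t+1}$ is a linear combination of independent zero-mean Gaussians, so it is zero-mean Gaussian with covariance
\[
\Sigma_t^{\frac12}V_t\Sigma_t^{-\frac12}\Sigma_t\Sigma_t^{-\frac12}V_t\Sigma_t^{\frac12}+\Sigma_t^{\frac12}V_tS_t^{\frac12}M^\top W^{-1}WW^{-1}MS_t^{\frac12}V_t\Sigma_t^{\frac12}.
\]
This simplifies to $\Sigma_t^{\frac12}U(\hat V_t^2+\hat V_t^2\Lambda_tH)U^\top\Sigma_t^{\frac12}$. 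Since $\hat V_t^2(I+\Lambda_tH)=\hat V_t$, this equals $\Sigma_t^{\frac12}V_t\Sigma_t^{\frac12}$.

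As a cross-check, the same covariance follows from the MMSE formula $\Sigma_t-\cov(e_t,y_t)\cov(y_t)^{-1}\cov(y_t,e_t)$. That route needs $\Sigma_t\succ0$ so that $\Sigma_t^{-\frac12}$ exists. This holds inductively because $\hat V_t\succ0$, so $\Sigma_{t+1}\succ0$ whenever $\Sigma_t\succ0$, starting from $\Sigma_0\succ0$.
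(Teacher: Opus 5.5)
Your proof is correct and follows essentially the paper's route. You substitute $y_t = B_1QS_t^{\frac{1}{2}}\Sigma_t^{-\frac{1}{2}}e_t + w_t$, reduce everything to diagonal matrices via the simultaneous diagonalization by $U$, and get the covariance from $\hat V_t^2(I+\Lambda_tH)=\hat V_t$; the only difference is cosmetic, since the paper whitens by $W^{-\frac{1}{2}}$ and uses the matrix inversion lemma for the $e_t$-coefficient plus push-through for the noise term, whereas you apply push-through once to the gain and obtain the $e_t$-coefficient from $I-(I+\Lambda_tH)^{-1}\Lambda_tH=(I+\Lambda_tH)^{-1}$.
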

\begin{IEEEproof}
	See Appendix.
\end{IEEEproof}

The significance of Lemma~\ref{lem:estimation} lies not only in providing a computationally efficient implementation of Algorithm~\ref{alg:1}, but also in enabling a theoretical performance guarantee. In particular, the recursive expression for the estimation error covariance allows us to prove that if all the diagonal entries of $\Lambda_t$ are positive for every time step $t$, then the follower’s estimate of $x_*$ becomes increasingly accurate and ultimately converges exactly to $x_*$.

\begin{theorem} \label{thm:covet}
	Let  $\Sigma_t$ denote the follower's estimation error covariance of $x_*$ at time $t$ under Algorithm~\ref{alg:1}. For any fixed projection matrix $Q$, suppose there is a constant $\sigma>0$ such that $\Lambda_t(i)\ge \sigma$ for all $t$ and all $i$. Then, for any $t\ge 1$, it holds that $\Sigma_t \succeq \Sigma_{t+1}$. Moreover, if $ \rk(B_1 Q) =\rk(B_1)= d_0$,
	then $\Sigma_t \succ \Sigma_{t+1}$ and 
	\begin{align*}
		\tr(\Sigma_{t}) \le \frac{1}{(1+\sigma \psi)^{t}} \tr({\Sigma}_0),
	\end{align*}
	where $\psi = \min_{i} H(i)$.
\end{theorem}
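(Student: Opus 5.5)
The argument rests entirely on Lemma~\ref{lem:estimation}, which gives $\Sigma_{t+1} = \Sigma_t^{\frac12} V_t \Sigma_t^{\frac12}$ with $V_t = U\hat V_t U^\top$ and $\hat V_t(i) = 1/(1+\Lambda_t(i)H(i))$. We proceed in three steps.

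\textbf{Step 1: spectral bounds on $V_t$.} Since $H(i)\ge 0$ and $\Lambda_t(i)\ge\sigma>0$, every eigenvalue of $V_t$ lies in $(0,1]$. Hence $0\prec V_t\preceq I$.

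\textbf{Step 2: monotonicity.} By congruence,
\begin{align*}
\Sigma_t-\Sigma_{t+1}=\Sigma_t^{\frac12}(I-V_t)\Sigma_t^{\frac12}\succeq 0 .
\end{align*}
This gives the first claim $\Sigma_t\succeq\Sigma_{t+1}$. Note that $\Sigma_t^{\frac{1}{2}}$ exists because each $\Sigma_t$ is PSD. In fact $\Sigma_t\succ0$, by induction: $\Sigma_0\succ0$ and $V_t\succ0$ preserve positive definiteness, and $\Sigma_t^{-\frac12}$ is already used in Algorithm~\ref{alg:1}.

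\textbf{Step 3: the full-rank case.} Suppose $\rk(B_1Q)=d_0$. Here $Q\in\mathbb{R}^{d_1\times d_0}$, so $B_1Q$ is $d_0\times d_0$ and invertible. Then $(B_1Q)^\top W^{-1}B_1Q\succ0$, so every $H(i)>0$ and $\psi=\min_i H(i)>0$. Consequently
\begin{align*}
\hat V_t(i)\le \frac{1}{1+\sigma\psi}<1, \qquad \text{so} \qquad V_t\preceq \frac{1}{1+\sigma\psi}\,I\prec I .
\end{align*}
Applying the congruence with the invertible $\Sigma_t^{\frac12}$ gives $\Sigma_t-\Sigma_{t+1}\succ0$. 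For the trace bound, the same congruence gives
\begin{align*}
\Sigma_{t+1}\preceq \frac{1}{1+\sigma\psi}\,\Sigma_t .
\end{align*}
Taking traces, which are monotone under $\preceq$, yields $\tr(\Sigma_{t+1})\le\tr(\Sigma_t)/(1+\sigma\psi)$. Iterating from $\Sigma_0$ gives $\tr(\Sigma_t)\le(1+\sigma\psi)^{-t}\tr(\Sigma_0)$.

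The main point needing care is the dimensional bookkeeping in Step 3: confirming that $H$ is $d_0\times d_0$ and strictly positive exactly under the rank condition. If instead the rank condition were read with $B_1Q$ being $d_0\times d_0$ but only PSD, the strict inequalities would fail, which is why the hypothesis $\rk(B_1Q)=d_0$ is needed. Everything else is routine congruence and trace monotonicity, so no further obstacle is expected.
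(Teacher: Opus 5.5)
Your proof is correct and follows the paper's route: both rest on Lemma~1 and the congruence $\Sigma_t-\Sigma_{t+1}=\Sigma_t^{\frac12}(I-V_t)\Sigma_t^{\frac12}$, with the full-rank hypothesis used only to make every $H(i)>0$. The differences are refinements rather than a different argument: you take traces of the Loewner bound $\Sigma_{t+1}\preceq(1+\sigma\psi)^{-1}\Sigma_t$ instead of summing the rotated diagonal terms $\hat V_t(i)\bar\Sigma_t(i,i)$, and you explicitly check $\Sigma_t\succ0$, which the paper tacitly needs for the strict inequality.
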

\begin{IEEEproof}
	According to Lemma 1,
	\begin{align*}
		\Sigma_{t+1} = \Sigma_{t}^{\frac{1}{2}} V_t \Sigma_{t}^{\frac{1}{2}} = \Sigma_{t+1} = \Sigma_{t}^{\frac{1}{2}} U \hat{V}_t U^\top \Sigma_{t}^{\frac{1}{2}} 
	\end{align*}
	It follows that
	\begin{align*}
		\Sigma_t - \Sigma_{t+1} = \Sigma_{t}^{\frac{1}{2}} U (I - \hat{V}_t ) U^\top \Sigma_{t}^{\frac{1}{2}} 
	\end{align*}
	Recall that $\hat{V}_t$ is a diagonal matrix whose $i$-th diagonal entry is given by 
	\begin{align*}
		\hat{V}_t(i) = \frac{1}{1 + \Lambda_t(i) H(i)}.
	\end{align*}
	Since $(B_1Q)^\top W^{-1}B_1Q$ is PSD, its eigenvalues $H(i)\ge 0$ for all $i$.
	It is then clear that, if all the diagonal entries of $\Lambda_t$ are non-negative, then all the diagonal entries of $I-\hat{V}_t$ are also  non-negative. As a result, $\Sigma_t -\Sigma_{t+1} \succeq 0$. 
	
	If $\rk(B_1)=d_0$ and $B_1 Q \in \mathbb{R}^{d_0\times d_0}$ is full rank (e.g., let $Q=B_1^\top$). Consequently, $ (B_1 Q)^\top W^{-1} B_1Q$ is PD, meaning that $H(i)>0$ for all $i$. In this case, we have $\Sigma_t -\Sigma_{t+1} \succ 0$ for all $t$. To show that $\tr(\Sigma_t)$ converges to zero, let $\bar{\Sigma}_t = U^\top \Sigma_t U$ and $\psi = \min_{i} H(i)$. Then we have
	\begin{align*}
		\tr(\Sigma_{t+1}) & = \tr(\Sigma_{t}^{\frac{1}{2}} V_t \Sigma_{t}^{\frac{1}{2}} ) = \tr(\hat{V}_t U^\top \Sigma_t U) \\
		& = \sum_{i=1}^{d_0} \hat{V}_t(i) \bar{\Sigma}_t(i,i) \\
		& \le \frac{1}{1+\sigma \psi} \sum_{i=1}^{d_0} \bar{\Sigma}_t(i,i) \\
		& = \frac{1}{1+\sigma \psi} \tr(\bar{\Sigma}_t) \\
		& = \frac{1}{1+\sigma \psi} \tr({\Sigma}_t)  \\
		& \le \frac{1}{(1+\sigma \psi)^{t+1}} \tr({\Sigma}_0).
	\end{align*}
This completes the proof.
\end{IEEEproof}

Theorem \ref{thm:covet} highlights the importance of choosing a full-rank projection matrix $Q$; otherwise, the estimation error covariance may not decrease strictly over time. When $Q$ is full-rank, and since $\sigma \psi >0$ and $\tr(\Sigma_t)\ge 0$ for all $t$, it follows immediately from Theorem~\ref{thm:covet} that $\tr(\Sigma_{t})\to 0$ as $t\to \infty$. In this case, the theorem implies that if $\Lambda_t$ is chosen to have strictly positive diagonal entries for all $t$ and gradually decreases toward the zero matrix over time, then both agents' control input will asymptotically converge to the optimal ones. As a result, the system can achieve the target state $x_*$ provided that the time horizon is sufficiently long. The remaining task is to optimize the sequence $\{\Lambda_t\}$ over time to minimize the overall control cost.

\begin{remark}
	Before proceeding to the optimization of communication power, we justify the choice of the leader's offset term in Algorithm~\ref{alg:1}. Although the leader knows the target state $x_*$, and it may seem natural to set its offset term as the optimal value $D_t^lx_*$; we instead choose $c^l_t=D_t^l x_*^{(t)}$. Why prefer this sub-optimal offset? To see the motivation, suppose we use $c^l_t = D_t^lx_*$. Then the system evolves as
	\begin{align*}
		x_{t+1} & = (A-BK_t)x_t + B_1 D^l_t x_* + B_2 D^f_t x_*^{(t)} + B_1 s_t + w_t, \\
		&= (A-BK_t)x_t + B_1 D^l_t (x_*^{(t)} + e_t) + B_2 D^f_t x_*^{(t)} + B_1 s_t + w_t. 
	\end{align*}
	The follower can still construct the implicit channel output as $y_t = x_{t+1} - (A-BK_t)x_t - BD_t x_*^{(t)}$. However, the implicit channel now is defined as
	\begin{align} \label{eq: yt-opt-offset}
		y_t = B_1 D^l_t e_t + B_1 s_t + w_t.
	\end{align}
	By defining $s_t$ as a linear function of $e_t$, the modified channel still allows $e_t$ to be estimated from $y_t$. However, the expression of the estimation error covariance becomes significantly more complicated---even when $S_t$ is restricted to the form $S_t = U\Lambda_t U^\top$. This added complexity not only increases the computational burden of the algorithm but also poses substantial challenges for optimizing the communication power. Moreover, in the modified channel \eqref{eq: yt-opt-offset}, $(D^l_t e_t + s_t)$ can be viewed as the communication signal, yet only the power of $s_t$ can be optimized. From the perspective of control performance, it is not clear that using the nominally optimal offset yields better results. For these reasons, we adopt the cleaner formulation $c^l_t=D_t^l x_*^{(t)}$, which leads to a more elegant and tractable analysis.
	
\end{remark}

\subsection{Communication Power Optimization}
This section addresses the final step in implementing  Algorithm~\ref{alg:1}---the optimization of communication power, governed by  the sequence of matrices $\{\Lambda_t\}$.
With the aim of minimizing the overall control cost $J_n$, we formulate this optimization problem as a deterministic MDP with matrix-valued states and actions. Under a mild condition on the terminal estimation error covariance, the optimal solution to this deterministic MDP can be derived using the discrete matrix minimum principle \cite{athans1966discrete,LQG-delay-1974}.

We first derive the control cost under Algorithm~\ref{alg:1}, expressing it as a function of the decision variables $\{\Lambda_t\}$. Recall that, under Algorithm~\ref{alg:1}, the joint control input at time $t$ is given by
\begin{align} \label{eq: ut}
	u_t = -K_t x_t + D_t x_*^{(t)} + \tilde{I}{s}_t& = -K_t x_t + D_t(x_* - e_t) +  \tilde{I}{s}_t  \notag \\
	& = -K_t(x_t -x_*) + c'_t + (\tilde{I} QS_t^{\frac{1}{2}} \Sigma_{t}^{-\frac{1}{2}}  - D_t )e_t ,
\end{align}
where $c'_t\triangleq (D_t-K_t)x_*$ is a constant, and $\tilde{I} \triangleq [\mathbf{I} \ \mathbf{0}]^\top$, in which $\mathbf{I}$ is the $d_1\times d_1$ identity matrix and $\mathbf{0}$ is the $d_1\times d_2$ zero matrix. To simplify notations, let $\bar{A}_t \triangleq A-BK_t$ and $Q_1 \triangleq B_1Q$.
Then the system state evolves as follows:
\begin{align*}
	x_{t+1} = \bar{A}_t x_t + B D_tx_* + (Q_1 S_t^{\frac{1}{2}} \Sigma_{t}^{-\frac{1}{2}} - BD_t )e_{t} + w_t. 
\end{align*}
Let $Z_t \triangleq \cov(z_t)$, where $z_t = x_t - x_*$ is the state error at time $t$. It follows immediately that
\begin{align} \label{eq:error-zt}
	z_{t+1} = x_{t+1} - x_* =  \bar{A}_tz_t +  c''_t + (Q_1S_t^{\frac{1}{2}} \Sigma_{t}^{-\frac{1}{2}} -BD_t )e_{t} + w_t,
\end{align}
where $c''_t \triangleq (A-\mathbf{I})x_* + B c'_t $ is a constant.
Denote by $\Omega_t=\cov(z_t,e_t)$ the covariance between $z_t$ and $e_t$. 
Then according to \eqref{eq: ut}, the expected input cost at each time step is given by
\begin{align*}
	\mathbb{E}[{u}_t^\top G {u}_t] =& \mathbb{E}[ [-K_tz_t + c'_t + (\tilde{I} QS_t^{\frac{1}{2}} \Sigma_{t}^{-\frac{1}{2}}  - D_t )e_t ]^\top G [-K_tz_t + c'_t + (\tilde{I}QS_t^{\frac{1}{2}} \Sigma_{t}^{-\frac{1}{2}}  - D_t )e_t ] ]\\
	= &  \tr(Z_t K_t^\top G K_t) + \tr(Q^\top G_1 Q S_t) + \tr(D_t^\top G D_t \Sigma_t) - 2\tr(D_t^\top G \tilde{I}QS_t^{\frac{1}{2}} \Sigma_{t}^{\frac{1}{2}} ) - 2\tr((\tilde{I}QS_t^{\frac{1}{2}} \Sigma_{t}^{-\frac{1}{2}}  - D_t )^\top G K_t \Omega_t ) \\ 
	&+ \text{constant},
\end{align*}
where the constant includes terms that are independent of the decision variable $\Lambda_t$. In addition,
\begin{align*}
	\mathbb{E}[z_t^\top F z_t] =  \tr(Z_t F) + x_*^\top F x_*.
\end{align*}
Therefore, the quadratic control cost under Algorithm~\ref{alg:1} is given by
\begin{align*}
	J_n =& \sum_{t=0}^{n-1} \tr( (F+K_t^\top G K_t)Z_t) + \tr(Q^\top G_1 Q S_t) + \tr(D_t^\top G D_t \Sigma_t) - 2\tr(D_t^\top G \tilde{I}QS_t^{\frac{1}{2}} \Sigma_{t}^{\frac{1}{2}} ) - 2\tr((\tilde{I}QS_t^{\frac{1}{2}} \Sigma_{t}^{-\frac{1}{2}}  - D_t )^\top GK_t \Omega_t )  \\
	&+ \tr(Z_n F_n)  + \text{constant}.
\end{align*}
Next, we show that $J_n$ can be viewed as the cost of a deterministic MDP, where the state is a tuple of matrices and the action is $\Lambda_t$. The key step is to derive the state transition equations.

Since $z_t$ and $e_t$ are correlated, we treat them jointly by constructing an augmented state. In particular,
let $\rho_t=[z_t, e_t]$. Then according to Lemma~\ref{lem:estimation} and equation~\eqref{eq:error-zt}, the process $\{\rho_t:t\ge 0\}$  evolves according to the following equation:
\begin{align}  \label{eq: rhot}
	\rho_{t+1} = A^\rho_t \rho_t  + B^\rho_t w_t + \bar{c}_t,
\end{align}
where
\begin{align*}
	A^\rho_t \triangleq \begin{bmatrix}
		\bar{A}_t & Q_1S_t^{\frac{1}{2}} \Sigma_{t}^{-\frac{1}{2}} - BD_t \\
		0 & \Sigma_{t}^{\frac{1}{2}} V_t  \Sigma_{t}^{-\frac{1}{2}}
	\end{bmatrix}, \ B^\rho_t \triangleq \begin{bmatrix}
		I \\
		- \Sigma_{t}^{\frac{1}{2}} V_t S_t^{\frac{1}{2}} Q_1^\top  W^{-1} 
	\end{bmatrix}, \ \bar{c}_t \triangleq \begin{bmatrix}
	c''_t \\
	0
	\end{bmatrix}.
\end{align*}
According to \eqref{eq: rhot}, the covariance of $\rho_t$ evolves as follows:
\begin{align} \label{eq:cov-rhot}
	\cov(\rho_{t+1}) = A^\rho_t  \cov(\rho_t) (A^\rho_t )^\top + B^\rho_t  W (B^\rho_t )^\top.
\end{align}
Given the definition of $\rho_t$, we can express the covariance matrix of $\rho_t$ as the following block matrix:
\begin{align*}
	\cov(\rho_t) = \begin{bmatrix}
		Z_t & \Omega_t \\
		\Omega_t^\top & \Sigma_t
	\end{bmatrix}.
\end{align*}
Matching the blocks on both sides of \eqref{eq:cov-rhot} yields
\begin{align} \label{eq: cov-zt}
	Z_{t+1}  = & \bar{A}_t Z_t\bar{A}_t ^\top + (Q_1S_t^{\frac{1}{2}} \Sigma_{t}^{-\frac{1}{2}} - BD_t)\Omega^\top_t\bar{A}_t ^\top 
	+ \bar{A}_t \Omega_t (Q_1S_t^{\frac{1}{2}} \Sigma_{t}^{-\frac{1}{2}} - BD_t)^\top  + W  \notag \\
	& + Q_1S_t Q_1^\top +BD_t \Sigma_t D_t^\top B^\top - BD_t \Sigma_{t}^{\frac{1}{2}} S_{t}^{\frac{1}{2}} Q_1^\top - Q_1 S_{t}^{\frac{1}{2}} \Sigma_{t}^{\frac{1}{2}} D_t^\top B^\top,   \\  \label{eq: cov-sigmat}
	\Sigma_{t+1}  =& \Sigma_{t}^{\frac{1}{2}} V_t  \Sigma_{t}^{\frac{1}{2}},  \\  \label{eq: cov-pit}
	\Omega_{t+1}  = &  \bar{A}_t \Omega_t   \Sigma_{t}^{-\frac{1}{2}} V_t  \Sigma_{t}^{\frac{1}{2}} - B D_t\Sigma_{t}^{\frac{1}{2}} V_t  \Sigma_{t}^{\frac{1}{2}}.
\end{align}
Note that the cost $J_n$ is completely determined by $(Z_t,\Sigma_t,\Omega_t)$ and $\Lambda_t$. It is therefore natural to treat $(Z_t,\Sigma_t,\Omega_t)$ as the state of the MDP at time $t$. However, we observe that the system \eqref{eq: cov-zt}-\eqref{eq: cov-pit} can be further simplified.
In particular, let $L_t \triangleq \Omega_t \Sigma_t^{-1}$. Then, according to Lemma~\ref{lem:estimation}, \eqref{eq: cov-pit} can be written as
\begin{align} \label{eq: cov-Lt}
	L_{t+1}  = &  \bar{A}_t L_t -  B D_t.
\end{align}
With this change of variables, both the system dynamics and the cost function can be expressed in terms of $Z_t, \Sigma_t, L_t$, without explicitly involving $\Omega_t$. Moreover, \eqref{eq: cov-Lt} implies that the sequence $\{L_t\}$ is constant and independent of the input $\Lambda_t$. As a result, the system can be represented using state $(Z_t,\Sigma_t)$ and input $\Lambda_t$, as follows:
\begin{align} \label{eq: cov-zt-2}
	Z_{t+1}  =  f^Z_t(Z_t,\Sigma_t,\Lambda_t) \triangleq& \bar{A}_t Z_t\bar{A}_t ^\top + Q_1S_t Q_1^\top + Q_1S_t^{\frac{1}{2}} \Sigma_{t}^{\frac{1}{2}}L_{t+1}^\top - BD_t \Sigma_t L^\top_t \bar{A}_t ^\top
	  + W  \notag \\
	& +  (Q_1S_t^{\frac{1}{2}} \Sigma_{t}^{\frac{1}{2}}L_{t+1}^\top - BD_t \Sigma_t L^\top_t \bar{A}_t ^\top)^\top  +BD_t \Sigma_t D_t^\top B^\top,   \\  \label{eq: cov-sigmat-2}
	\Sigma_{t+1}  = f^\Lambda_t(\Sigma_t,\Lambda_t) \triangleq & \Sigma_{t}^{\frac{1}{2}} V_t  \Sigma_{t}^{\frac{1}{2}}. 
\end{align}
Define the cost function at time $t$ as follows: $l_n(Z_n, \Sigma_n, \Lambda_n)=\tr(Z_nF_n)$ and, for $0\le t\le n-1$,
\begin{align*}
	l_t(Z_t, \Sigma_t, \Lambda_t) =& \tr( (F+K_t^\top G K_t)Z_t) + \tr(Q^\top G_1 Q S_t) + \tr(D_t^\top G D_t \Sigma_t) \\
	& + 2\tr(D_t^\top GK_t L_t \Sigma_t) - 2\tr((D_t + K_tL_t)^\top G \tilde{I}QS_t^{\frac{1}{2}} \Sigma_{t}^{\frac{1}{2}}).
\end{align*}
We thus can formulate the optimization of communication power as an MDP, denoted by $\mathcal{M}$, with state $ (Z_t,\Sigma_t)$, action $\Lambda_t$, state transition equations \eqref{eq: cov-zt-2}-\eqref{eq: cov-sigmat-2}, and cost function $l_t(Z_t,\Sigma_t,\Lambda_t)$ at time $t$. The objective is to find a policy to minimize the total cost over the time horizon:
\begin{align}
	\min_{\{\Lambda_t \}} \ \sum_{t=0}^n 	l_t(Z_t, \Sigma_t, \Lambda_t). 
\end{align} 

Since $f^Z_t,f^\Sigma_t$, and $l_t$ are all nonlinear functions, it is generally difficult to derive a closed-form expression for the optimal solution to the MDP. However, by imposing a mild boundary condition on the estimation error covariance, we can derive the corresponding optimal solution using the discrete matrix minimum principle. In particular, we consider the following boundary condition:
\begin{align}  \label{eq: bd-con}
	\Sigma_n = \epsilon \Sigma_0, 
\end{align}
where $\epsilon>0$ is  a constant. Intuitively, the overall control cost cannot be minimized if the follower maintains a large estimation error of the target state throughout the time horizon. Therefore, any optimal implicit communication scheme must ensure that the estimation error covariance at the final time step is small. To reflect this, we approximate the desired terminal covariance by scaling the initial covariance, i.e., $\Sigma^*_n \approx \epsilon \Sigma_0$, with $\epsilon$ capturing the desired accuracy level. Consequently, the optimal solution to the MDP with the boundary condition \eqref{eq: bd-con} is near-optimal for the original problem.

To derive the optimal solution under the boundary condition, let us define the Hamiltonian function \cite{athans1966discrete}
\begin{align*}
	\mathcal{H}_t(Z_t,\Sigma_t,\Lambda_t,\theta_{Z,t+1},\theta_{\Sigma,t+1}) =& l_t(Z_t, \Sigma_t,\Lambda_t) + \tr(f^Z_t(Z_t,\Sigma_t,\Lambda_t)\theta_{Z,t+1}^\top) +  \tr(f^\Sigma_t(\Sigma_t,\Lambda_t) \theta^\top_{\Sigma,t+1}) \\
	=& \tr( (F+K_t^\top G K_t)Z_t) + \tr(\bar{A}_t Z_t\bar{A}_t ^\top \theta_{Z,t+1}) + \tr(W\theta_{Z,t+1}) \\
	& + 2\tr(\Lambda^{\frac{1}{2}}_t U^\top \Sigma_{t}^{\frac{1}{2}} L_{t+1}^\top \theta_{Z,t+1}Q_1 U)  - 2\tr(\Lambda^{\frac{1}{2}}_t U^\top \Sigma_{t}^{\frac{1}{2}} (D_t + K_tL_t)^\top G \tilde{I}Q U)  \\
	&+ \tr(\Sigma_{t}^{\frac{1}{2}} U(I+\Lambda_tH)^{-1}U^\top \Sigma_{t}^{\frac{1}{2}} \theta_{\Sigma,t+1})  \\
	& + \tr(\Lambda_t U^\top Q_1^\top (\theta_{Z,t+1}+G_1)Q_1U) - \tr(BD_t \Sigma_t(L_{t+1}^\top + L^\top_t \bar{A}^\top_t)\theta_{Z,t+1}) \\
	& + \tr(D_t^\top G D_t \Sigma_t) + 2\tr(D_t^\top GK_t L_t \Sigma_t),
\end{align*}
where $\theta_{Z,t}\in \mathbb{R}^{d_0\times d_0}$ is the costate associated with $Z_t$, and $\theta_{\Sigma,t}\in \mathbb{R}^{d_0\times d_0}$ is the costate associated with $\Sigma_t$.
The following conditions are necessary for the optimal solution:
\begin{align} \label{eq:theta-Z}
	\theta^*_{Z,t} =& \frac{\partial \mathcal{H}_t}{\partial Z_t} = F+K_t^\top G K_t +  \bar{A}_t^\top \theta^*_{Z,t+1} \bar{A}_t, \ \theta^*_{Z,n} = F_n , \\
	\theta^*_{\Sigma,t} =& \frac{\partial \mathcal{H}_t}{\partial \Sigma_t} = D_t^\top GD_t + D_t^\top GK_t L_t + L^\top_t K_t^\top G D_t + L^\top_{t+1} \theta^*_{Z,t+1} BD_t + D_t^\top B^\top \theta^*_{Z,t+1} \bar{A}_t L_t \notag \\ \label{eq:theta-sigma}
	&+ \Theta_1 + 2\Theta_2 - 2 \Theta_3, \quad \theta^*_{\Sigma,n} = 0, \\ \label{eq:cond-sigma}
	\Sigma^*_{t+1}  =& \Sigma_{t}^{*\frac{1}{2}} U(I+\Lambda^*_tH)^{-1}U^\top   \Sigma_{t}^{*\frac{1}{2}}, \\ \label{eq:gradient-lam}
	0 = & \frac{\partial \mathcal{H}_t}{\partial \Lambda_t}|_{\Lambda^*_t},
\end{align}
where $\Theta_1$, $\Theta_2$, and $\Theta_3$ are determined by the following Sylvester-Lyapunov equations, respectively:
\begin{align} \label{eq:SL1}
	&\Sigma_{t}^{*\frac{1}{2}}\Theta_1 + \Theta_1 \Sigma_{t}^{*\frac{1}{2}} = \theta^*_{\Sigma,t+1} \Sigma_{t}^{*\frac{1}{2}} V_t +V_t \Sigma_{t}^{*\frac{1}{2}} \theta^*_{\Sigma,t+1}, \\ \label{eq:SL2}
	&\Sigma_{t}^{*\frac{1}{2}}\Theta_2 +\Theta_2 \Sigma_{t}^{*\frac{1}{2}} = \frac{1}{2}(L_{t+1}^\top \theta^*_{Z,t+1}Q_1 S^{\frac{1}{2}}_t +   S^{\frac{1}{2}}_t Q_1^\top\theta^*_{Z,t+1}L_{t+1} ), \\  \label{eq:SL3}
	&\Sigma_{t}^{*\frac{1}{2}}\Theta_3 + \Theta_3 \Sigma_{t}^{*\frac{1}{2}} = \frac{1}{2}((D_t + K_tL_t)^\top G \tilde{I}Q S^{\frac{1}{2}}_t +   S^{\frac{1}{2}}_t (G \tilde{I}Q)^\top (D_t + K_tL_t)).
\end{align}
The derivation of \eqref{eq:theta-sigma} is detailed in Appendix.
Since $\Lambda_t$ is a diagonal matrix, it is enough to compute the derivative w.r.t each diagonal entries. Therefore, \eqref{eq:gradient-lam} can be expressed as follows:for $j=1,2,\cdots,d_0$,
\begin{align} \label{eq:gradient-lam-j}
	0=\frac{\partial \mathcal{H}_t}{\partial \Lambda_t(j)} |_{\Lambda^*_t}= \frac{M_{1,t}(j)}{\sqrt{\Lambda^*_t(j)}}  - \frac{H(j) M_{2,t}(j)}{[1+\Lambda^*_t(j)H(j)]^2} 
	+M_{3,t}(j), 
\end{align}
where $M_{k,t}(j)$ denotes the $j$-th diagonal entry of the matrix $M_{k,t}$ for $k=1,2,3$, with the coefficient matrices given by
\begin{align*}
	M_{1,t} \triangleq & U^\top \Sigma_{t}^{*\frac{1}{2}} [L_{t+1}^\top \theta^*_{Z,t+1}Q_1+ (D_t + K_tL_t)^\top G \tilde{I}Q] U, \\
	M_{2,t} \triangleq &  U^\top \Sigma_{t}^{*\frac{1}{2}} \theta^*_{\Sigma,t+1} \Sigma_{t}^{*\frac{1}{2}} U, \\
	M_{3,t} \triangleq &  U^\top Q_1^\top (\theta^*_{Z,t+1}+G_1)Q_1U.
\end{align*}
The optimal costate sequence $\{\theta^*_{Z,t}\}$ can be computed recursively using \eqref{eq:theta-Z}.
Given the boundary condition \eqref{eq: bd-con}, we first solve for $\Lambda^*_{n-1}$ and $\Sigma^*_{n-1}$ using equations \eqref{eq:cond-sigma} and \eqref{eq:gradient-lam-j}. With these values, we then solve the Sylvester-Lyapunov equations \eqref{eq:SL1}-\eqref{eq:SL3} to compute $\theta^*_{\Sigma,n-1}$. This procedure is repeated backward in time from $t=n-1$ to $t=0$, allowing us to obtain all optimal communication power diagonal matrices $\{\Lambda^*_t\}$.

\subsection{A Computationally Efficient Solution}
Although incorporating the boundary condition \eqref{eq: bd-con} enables us to derive a near-optimal solution to the communication power optimization problem, solving equations \eqref{eq:cond-sigma}, \eqref{eq:gradient-lam-j}, and the Sylvester-Lyapunov equations \eqref{eq:SL1}–\eqref{eq:SL3} remains computationally intensive and analytically tedious. To address this, we propose a more efficient approach by further restricting the structure of the matrices $\{\Lambda_t\}$. In particular,
we consider the MDP input of the form
\begin{align} \label{eq:lam-a}
	\Lambda_t = {a}_t H^{-1}, \ 0\le t \le n-1,
\end{align}
where ${a}_t>0$ is a real value. Then $V_t$ admits a simple form:
\begin{align*}
	V_t = U(I+\Lambda_t H)^{-1} U^\top = \frac{1}{1+{a}_t} I.
\end{align*}
Let $\bar{a}_t \triangleq 1/(1+{a}_t)$ and $b_{t+1} = \prod_{i=0}^{t}\bar{a}_t $ with $b_0=1$. Then $\bar{a}_t\in (0,1)$ and
\begin{align} \label{eq: sigmat}
	\Sigma_{t+1}  = \bar{a}_t\Sigma_{t} = b_{t+1} \Sigma_0.
\end{align}
With this restriction on the communication power,
the MDP defined in \eqref{eq: cov-zt-2}-\eqref{eq: cov-sigmat-2} can be further simplified by representing it using state $Y_t=(Z_t, b_t)$ and input $a_t$, as follows:
\begin{align}
	b_{t+1} & = f^b_t(b_t,a_t)\triangleq b_t/(1+a_t), \\
	Z_{t+1} & = f^Z_t(Z_t,b_t,a_t) \triangleq  \bar{A}_t Z_t\bar{A}_t ^\top + \sqrt{{a}_t b_t} Q_{ab,t} - b_t Q_{b,t}+ {a}_t Q_a +W,
\end{align}
where the constants are given by
\begin{align*}
	Q_a & \triangleq Q_1 UH^{-1}U^\top Q_1^\top, \\
	Q_{b,t} & \triangleq BD_t \Sigma_0 L_{t+1} + \bar{A}_tL_t\Sigma_0(BD_t )^\top, \\
	Q_{ab,t} & \triangleq Q_1UH^{-\frac{1}{2}}U^\top\Sigma_0^{\frac{1}{2}}L^\top_{t+1} + (Q_1UH^{-\frac{1}{2}}U^\top\Sigma_0^{\frac{1}{2}} L^\top_{t+1})^\top.
\end{align*}
Accordingly, the cost function is then given by
\begin{align} \label{eq: dp-cost}
	l_t(Z_t, b_t, a_t) = \tr( (F+K_t^\top G K_t)Z_t) + a_t r_a + b_t r_{b,t} - 2 \sqrt{a_t b_t} r_{ab,t},
\end{align}
where the constants are defined as
\begin{align*}
	r_a & \triangleq \tr(Q^\top G_1 Q UH^{-1}U^\top), \\
	r_{b,t} & \triangleq \tr(D_t^\top G (D_t \Sigma_0+2K_tL_t\Sigma_0)), \\
	r_{ab,t} & \triangleq\tr((D_t + L_tK_t)^\top G \tilde{I}QUH^{-\frac{1}{2}}U^\top\Sigma_0^{\frac{1}{2}}).
\end{align*}

Using the same method as before, we define the Hamiltonian function as
\begin{align*}
	\mathcal{H}_t(Z_t,b_t,a_t,\theta_{Z,t+1},\theta_{b,t+1}) =& l_t(Z_t, b_t, a_t) + \tr(f^Z_t(Z_t,b_t,a_t)\theta_{Z,t+1}^\top) + \theta_{b,t+1} f^b_t(b_t,a_t) \\
	=& \tr( (F+K_t^\top G K_t)Z_t) + \tr(\bar{A}_t Z_t\bar{A}_t ^\top \theta_{Z,t+1}^\top) + \tr(W\theta_{Z,t+1}^\top) \\
	 & + a_t  [r_a  +  \tr(Q_a \theta_{Z,t+1}^\top) ] + \sqrt{a_tb_t} [\tr(Q_{ab,t} \theta_{Z,t+1}^\top) -2 r_{ab,t} ]\\
	 & + b_t [r_{b,t} + \theta_{b,t+1}/(1+a_t) - \tr(Q_{b,t} \theta_{Z,t+1}^\top) ],
\end{align*}
where $\theta_{Z,t}\in \mathbb{R}^{d_0\times d_0}$ is the costate associated with $Z_t$, and $\theta_{b,t}\in \mathbb{R}$ is the costate associated with $b_t$. The necessary conditions of the optimal solution are, respectively,
\begin{align} \label{eq:theta-Z2}
	\theta^*_{Z,t} &= \frac{\partial \mathcal{H}_t}{\partial Z_t} = F+K_t^\top G K_t +  \bar{A}_t^\top \theta^*_{Z,t+1} \bar{A}_t, \ \theta^*_{Z,n} = F_n  \\ \label{eq:theta-b}
	\theta^*_{b,t} &= \frac{\partial \mathcal{H}_t}{\partial b_t} = r_{b,t} + \theta^*_{b,t+1}/(1+a_t) - \tr(\theta^*_{Z,t+1} Q^\top_{b,t} ) + \frac{a_t}{2 \sqrt{a_tb_t}} [\tr(\theta^*_{Z,t+1} Q_{ab,t} ^\top) -2 r_{ab,t} ],\ \theta^*_{b,n} = 0 \\ \label{eq:opt-b}
	b^*_{t+1} &= b^*_t/(1+a^*_t) \\  \label{eq:grad-a}
	0 &= \frac{\partial \mathcal{H}_t}{\partial a_t} 
	= r_a  +  \tr( \theta^*_{Z,t+1} Q_a^\top) +  \frac{b^*_t}{2 \sqrt{a^*_tb^*_t}} [\tr(\theta^*_{Z,t+1} Q_{ab,t} ^\top) -2 r_{ab,t} ] - b^*_{t+1}\frac{ \theta^*_{b,t+1}}{(1+a^*_t)}.
\end{align}
Given the input of the form in \eqref{eq:lam-a}, the boundary condition \eqref{eq: bd-con} reduces to $b^*_n=\epsilon$.
We then can compute the optimal sequence $\{a^*_t\}$ in backward recursion from $t=n-1$ to $t=0$ by solving the following equation:
\begin{align*}
	r_a  +  \tr( \theta^*_{Z,t+1} Q_a^\top) +  \frac{\sqrt{b^*_{t+1}(1+a_t^*)}}{2 \sqrt{a^*_t}} [\tr(\theta^*_{Z,t+1} Q_{ab,t} ^\top) -2 r_{ab,t} ] - b^*_{t+1}\frac{ \theta^*_{b,t+1}}{1+a_t^*} = 0.
\end{align*}
Once $a^*_t$ is determined, we compute $b^*_t$ and $\theta^*_{b,t}$ using \eqref{eq:opt-b} and \eqref{eq:grad-a}, respectively. This allows us to proceed to the next step and compute $a^*_{t-1}$.

\section{Under-actuated Leader} \label{sec:under}
In this section, we extend the implicit coordination scheme to systems with an under-actuated leader—that is, when $\rk(B_1) = r < d_0$. As shown in Theorem \ref{thm:covet}, in this case, the trace of the estimation error covariance, $\tr(\Sigma_t)$, is no longer guaranteed to be strictly decreasing over time when using Algorithm 1. This means that the follower's estimate of the target state can not be made arbitrarily accurate. To address this, we propose a modification to the implicit coordination scheme that restores the property of strictly decreasing estimation error.
 
First of all, consider the singular value decomposition (SVD) of matrix $B_1$:
\begin{align*}
	B_1 = \Gamma_0 \bar{\Psi}_1 \Gamma^\top_1,  
\end{align*}
where ${\Gamma}_0\in \mathbb{R}^{d_0\times d_0}$  and $\Gamma_1\in \mathbb{R}^{d_1\times d_1}$ are unitary matrices, and $\bar{\Psi}_1$ is a rectangular diagonal matrix, which can be expressed as a block matrix:
\begin{align*}
	\bar{\Psi}_1 = \begin{bmatrix}
		{\Psi}_1 & 0 \\
		0 & 0
	\end{bmatrix},
\end{align*}
where ${\Psi}_1 \in \mathbb{R}^{r\times r}$ is a diagonal matrix, whose diagonal entries are the positive singular values of $B_1$. Now, the implicit communication channel, as defined in \eqref{eq:output-yt}, is equivalent to the following:
\begin{align} \label{eq:ybar-t}
	\bar{y}_t \triangleq {\Gamma}^\top_0 y_t = \bar{\Psi}_1 \Gamma^\top_1 s_t + {\Gamma}^\top_0 w_t = \bar{\Psi}_1 \bar{s}_t + \bar{w}_t,
\end{align}
where $ \bar{s}_t = \Gamma^\top_1 s_t\in \mathbb{R}^{d_1}$ and $\bar{w}_t ={\Gamma}^\top_0 w_t \in \mathbb{R}^{d_0}$ with covariance matrix $\cov(\bar{w}_t)=  \bar{W}\triangleq  {\Gamma}^\top_0 W {\Gamma}_0$. 

It is clear that only the first $r$ elements of $\bar{s}_t$ are useful for communication, while the remaining elements effectively transmit no information to $\bar{y}_t$.
For simplicity, we assume that $\tau\triangleq d_0/r$ is an integer\footnote{If $d_0/r$ is not an integer, we can construct an augmented message vector $\bar{x}_*$ by padding zeros after $x_*$, i.e., $\bar{x}_* = [x_*, 0]$.} and define a set of $r\times d_0$ projection matrices
\begin{align} \label{eq:proj-mat}
	P_k = \begin{bmatrix}
		 0&I_r& 0
	\end{bmatrix}, \ k\in \{0,1,\cdots,\tau-1\},
\end{align}
where $I_r$ denotes the $r\times r$ identity matrix, positioned from the $(kr+1)$-th to the $(k+1)r$-th column. 
Furthermore, we express $\bar{W}$ as a block matrix:
\begin{align*}
	\bar{W} = \begin{bmatrix}
		\bar{W}_1 & \bar{W}_3 \\
		\bar{W}_3^\top & \bar{W}_2
	\end{bmatrix},
\end{align*}
where $\bar{W}_1$ is a $r\times r$ covariance matrix corresponding to the first $r$ elements of $\bar{w}_t$.

\subsection{Implicit Coordination Scheme for the Under-actuated Setting}

In this section, we first define a virtual channel and propose a corresponding coding scheme. Building on this construction, we then introduce the implicit coordination scheme for the under-actuated setting and show that it guarantees a strictly decreasing estimation error covariance for the follower.

Specifically, we construct a virtual channel with an $r$-dimensional input and output, defined as:
\begin{align} \label{eq:vc-k}
	\tilde{y}_t = \Psi_1 \tilde{s}_t + \tilde{w}_t,
\end{align}
where  $\tilde{w}_t\in \mathbb{R}^{r}$ is a zero-mean white Gaussian noise with covariance matrix $\bar{W}_1 $. This virtual channel is derived directly from the one defined in \eqref{eq:ybar-t} by extracting only the first $r$ entries from its input and output—since the remaining entries carry no information, they are discarded. We focus on this virtual channel rather than the original in \eqref{eq:ybar-t} because it is more tractable for analysis.

Assume now that we want to transmit a message $e_t \in \mathbb{R}^{d_0}$ over the virtual channel at time $t$, where $e_t$ follows a Gaussian distribution $\mathcal{N}(0,\Sigma_t)$. Since the dimension of the message exceeds that of the channel input, we apply a projection matrix $P_k$ as defined in \eqref{eq:proj-mat}. As will become clearer later,  changing the value of $k$ over time is necessary to drive the estimation error to zero. For now, we fix an arbitrary $k\in \{0,1,\cdots,\tau-1\}$.
We then define the coding and decoding scheme for time $t$ by extending the approach from Section~\ref{subsec:coding}, as described below:

\begin{itemize}
	\item [1.] \textbf{Encoding}: The encoder determines the channel input based on $e_t$  using the following mapping:
	\begin{align} \label{eq:enc}
		\tilde{s}^{(k)}_t = S_t^{\frac{1}{2}} P_k  \Sigma_t^{-\frac{1}{2}} e_t,
	\end{align}
	where  $S_t \in \mathbb{R}^{r\times r}$ is the covariance matrix of $\tilde{s}^{(k)}_t$. Here, the superscript ${(k)}$ emphasizes the dependence on the projection matrix $P_k$.
	We assume that $S_{t}$ and $\Psi_1 \bar{W}^{-1}_1 \Psi_1$ can simultaneously be diagonalizable by a unitary matrix $U_1$. That is, $\Psi_1 \bar{W}^{-1}_1 \Psi_1 = U_1 \Pi_1 U_1^\top$ and $S_{t} = U_1 \Lambda_{t} U^\top_1$, where both $\Lambda_{t}$ and $\Pi_1$ are $r\times r$ diagonal matrices.
	
	\item [2.] \textbf{Decoding}: The receiver observes the channel output
	\begin{align*}
		\tilde{y}_t^{(k)} = \Psi_1 S_t^{\frac{1}{2}} P_k  \Sigma_t^{-\frac{1}{2}} e_t + \tilde{w}_t,
	\end{align*}
	 and then estimates $e_t$ as follows:
	\begin{align} \label{eq:dec-vc}
		\hat{e}_t & \triangleq \mathbb{E}[e_t|\tilde{y}_t^{(k)}] = \cov(e_t) (\Psi_1 S_t^{\frac{1}{2}} P_k  \Sigma_t^{-\frac{1}{2}} )^\top \cov(\tilde{y}_t)^{-1} \tilde{y}_t \notag\\
		&=\Sigma_t^{\frac{1}{2}}  P^\top_k S_t^{\frac{1}{2}} \Psi_1(\Psi_1 S_t \Psi_1 + \bar{W}_1)^{-1}\tilde{y}_t^{(k)}.
	\end{align}
	Denote by $\varepsilon_{t} = e_{t} - \hat{e}_{t}$ the receiver's estimation error.  
\end{itemize}

Let $U_\tau \triangleq \mathtt{diag}(U_1, \cdots, U_1)$ be a block diagonal matrix with $\tau$ copies of $U_1$ on the diagonal.  The following lemma characterizes the distribution of the estimation error $\varepsilon_t$.

\begin{lemma} \label{lem:virtual-est}
	Given $\Psi_1 \bar{W}^{-1}_1 \Psi_1 = U_1 \Pi_1 U_1^\top$ and $S_{t} = U_1 \Lambda_{t} U^\top_1$. If $P_k$ is used as the projection matrix in \eqref{eq:enc}, then  the estimation error $\varepsilon_t$ is given by
	\begin{align*}
		\varepsilon_t =\Sigma_{t}^{\frac{1}{2}} \bar{V}^{(k)}_t  \Sigma_{t}^{-\frac{1}{2}} e_{t} -  \Sigma_{t}^{\frac{1}{2}}  \bar{V}^{(k)}_t  P^\top_k S_t^{\frac{1}{2}} \Psi_1 \bar{W}^{-1}_1 \tilde{w}_t,
	\end{align*}
	where $\bar{V}^{(k)}_t = U_\tau \tilde{V}^{(k)}_t U_\tau^\top$ and $\tilde{V}^{(k)}_t = \mathtt{diag}(I_{(k-1)r}, \tilde{V}_{t},I_{(\tau-k)r})$ is a block diagonal matrix. Each $\tilde{V}_{t} \in \mathbb{R}^{r\times r}$ is a diagonal matrix with the $j$-th diagonal element given by
	\begin{align*}
		\tilde{V}_{t}(j) = \frac{1}{1+ \Lambda_{t}(j) \Pi_1(j)}.
	\end{align*}
	In addition, $\varepsilon_{t}\sim \mathcal{N}(0, \Sigma_{t}^{\frac{1}{2}} \bar{V}^{(k)}_t \Sigma_{t}^{\frac{1}{2}})$.
\end{lemma}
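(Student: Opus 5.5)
Plan: substitute the channel output into the decoder \eqref{eq:dec-vc} and split $\varepsilon_t = e_t-\hat e_t$ into a term linear in $e_t$ and a term linear in $\tilde w_t$. Then simplify each coefficient using the simultaneous diagonalization $S_t=U_1\Lambda_tU_1^\top$, $\Psi_1\bar W_1^{-1}\Psi_1=U_1\Pi_1U_1^\top$, in the same way as Lemma~\ref{lem:estimation} for the fully actuated case.

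\textbf{Step 1: write the error in terms of a gain.} Set $G_t \triangleq S_t^{\frac12}\Psi_1(\Psi_1S_t\Psi_1+\bar W_1)^{-1}$. Substituting $\tilde y_t^{(k)}$ gives
\begin{align*}
\varepsilon_t = \Sigma_t^{\frac12}\big(I - P_k^\top G_t\Psi_1S_t^{\frac12}P_k\big)\Sigma_t^{-\frac12}e_t - \Sigma_t^{\frac12}P_k^\top G_t\tilde w_t .
\end{align*}

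\textbf{Step 2: rewrite the gain with the push-through identity.} We have
\begin{align*}
G_t = (I_r+S_t^{\frac12}\Psi_1\bar W_1^{-1}\Psi_1S_t^{\frac12})^{-1}S_t^{\frac12}\Psi_1\bar W_1^{-1}.
\end{align*}
Because $S_t^{\frac12}=U_1\Lambda_t^{\frac12}U_1^\top$ and $\Psi_1\bar W_1^{-1}\Psi_1$ share the eigenbasis $U_1$, the inverse equals $U_1(I+\Lambda_t\Pi_1)^{-1}U_1^\top = U_1\tilde V_tU_1^\top$. Consequently
\begin{align*}
G_t\Psi_1S_t^{\frac12} = U_1\tilde V_t\Lambda_t\Pi_1U_1^\top = I_r - U_1\tilde V_tU_1^\top ,
\end{align*}
and $G_t = U_1\tilde V_tU_1^\top S_t^{\frac12}\Psi_1\bar W_1^{-1}$.

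\textbf{Step 3: lift to $d_0$ dimensions.} $P_k^\top P_k$ is the block-selector $\mathtt{diag}(0,I_r,0)$, and $U_\tau$ is block diagonal, so it commutes with this selector. Therefore
\begin{align*}
I - P_k^\top(I_r-U_1\tilde V_tU_1^\top)P_k = U_\tau \tilde V_t^{(k)}U_\tau^\top = \bar V_t^{(k)} .
\end{align*}
This yields the first term of the claimed formula. For the noise term, write $P_k^\top U_1\tilde V_tU_1^\top = \bar V_t^{(k)}P_k^\top$. This holds because $\bar V_t^{(k)}$ acts as $U_1\tilde V_tU_1^\top$ on the $k$-th block and $P_k^\top$ maps into that block. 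It gives exactly $-\Sigma_t^{\frac12}\bar V_t^{(k)}P_k^\top S_t^{\frac12}\Psi_1\bar W_1^{-1}\tilde w_t$.

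\textbf{Step 4: the distribution.} The error $\varepsilon_t$ is a linear function of the jointly Gaussian, independent pair $(e_t,\tilde w_t)$, so it is zero-mean Gaussian. Its covariance is the MMSE error covariance
\begin{align*}
\Sigma_t - \Sigma_t^{\frac12}P_k^\top G_t\Psi_1S_t^{\frac12}P_k\Sigma_t^{\frac12},
\end{align*}
which by Step 3 equals $\Sigma_t^{\frac12}\bar V_t^{(k)}\Sigma_t^{\frac12}$. Alternatively, one can sum the two quadratic forms and use $\tilde V_t^2+\tilde V_t^2\Lambda_t\Pi_1=\tilde V_t$. Note that the block diagonal $\tilde V_t^{(k)}$ should place $\tilde V_t$ in block position $k$ (0-indexed), consistent with $P_k$.

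\textbf{Main obstacle.} The substantive step is Step 3. It requires checking the commutation of $P_k^\top$ with $U_\tau$ and $\bar V_t^{(k)}$, and keeping the block-index bookkeeping straight. Everything else repeats the diagonalization argument of Lemma~\ref{lem:estimation}.
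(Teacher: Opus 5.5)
Your proposal is correct and is essentially the paper's proof: you split $\varepsilon_t$ into its $e_t$ and $\tilde w_t$ parts, use the matrix inversion lemma and push-through identity with the simultaneous diagonalization by $U_1$, and lift to $d_0$ dimensions via $P_k^\top U_1 = U_\tau P_k^\top$. The differences are cosmetic: you diagonalize at the $r\times r$ level before lifting, where the paper inverts the $d_0\times d_0$ matrix directly using $P_kP_k^\top=I_r$, and you read off the covariance from the Gaussian MMSE error-covariance formula rather than summing the two quadratic forms.

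Your indexing remark is also right. With $0$-indexed $k$, as in the definition of $P_k$, the block matrix should be $\mathtt{diag}(I_{kr},\tilde V_t,I_{(\tau-k-1)r})$ rather than the stated $\mathtt{diag}(I_{(k-1)r},\tilde V_t,I_{(\tau-k)r})$.
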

\begin{IEEEproof}
	See Appendix.
\end{IEEEproof}

Lemma \ref{lem:virtual-est} characterizes how a transmission over the virtual channel improves the receiver's estimate of the message $e_t$. To see this, we examine the quantity
\begin{align*}
	\Sigma_t  - \cov(\varepsilon_t) = \Sigma_{t}^{\frac{1}{2}} U_\tau (I-\tilde{V}^{(k)}_t) U_\tau^\top \Sigma_{t}^{\frac{1}{2}} \succeq 0.
\end{align*}
It is clear that the estimation error covariance $\cov(\varepsilon_t)$ depends on the projection matrix $P_k$. Intuitively, because $P_k$ selects only part of the message $e_t$, the virtual channel input $s_t^{(k)}$ conveys information about only a subset of its components. Consequently, if the same projection matrix is used throughout the time horizon, only that subset can be driven toward zero estimation error, while the remaining components stay bounded away from zero. To overcome this limitation, the projection matrix must vary over time so that every dimension of the target state is updated sufficiently often.
In particular, we divide the time horizon into $p=\lceil n/\tau \rceil$ periods, each consisting of $\tau$ steps. Within each period, the projection matrices $\{P_k:0\le k\le \tau-1\}$ are applied sequentially, so that different components of the message are transmitted in different steps.

Using the same notations as in the full-actuated setting, we denote by $e_t\in \mathbb{R}^{d_0}$ the message to be transmitted at the $t$-th time step, where $e_0=x_*$ and $e_{t+1}$ will be computed recursively from $e_t$. In addition, denote by $\Sigma_t$ the covariance matrix of $e_t$. We now describe how the control inputs are determined at each time step.

As we discussed around \eqref{eq:ybar-t}, when $\rk(B_1)=r$, only the first $r$ elements of $\bar{s}_t$ can effectively transmit information, since $\bar{\Psi}_1$ has non-zero entries only in its upper-left $r\times r$ block. The remaining $d_1 - r$ elements of $\bar{s}_t$ thus carry no information. As a result, each time we use the implicit channel defined in  \eqref{eq:ybar-t}, it suffices to encode information only in the first $r$ elements and set the remaining $d_1-r$ elements to zero. Doing so effectively yields a virtual channel, as defined in \eqref{eq:vc-k}. 
Therefore, at time step $t$, the input to the implicit channel defined in  \eqref{eq:ybar-t} is constructed as
\begin{align*}
	\bar{s}_{t}=[\tilde{s}_t, \ \mathbf{0}] = [S_t^{\frac{1}{2}} P_k  \Sigma_t^{-\frac{1}{2}} e_t, \ \mathbf{0}],
\end{align*}
where $\tilde{s}_t$ corresponds to the input to the virtual channel and the remaining $d_1-r$ elements are padded with zeros. The index $k$ of the projection matrix depends on the time step $t$ and cycles through the set $\{0,1,\cdots,\tau-1\}$. Here, we simply set $k=t\ \mathtt{mod}\ \tau$.

Based on $\bar{s}_t$, the input to the original implicit channel \eqref{eq:output-yt} is given by $s_{t}=\Gamma_1 \bar{s}_{t}$. Following the same approach as in the fully-actuated setting, the control inputs at time $t$ are defined as
\begin{align}
	v_t = -K_t^l x_t + D_t^l x^{(t)}_* + s_t, \ q_t = -K_t^fx_t + D_t^f x_*^{(t)}. 
\end{align}
Upon observing $x_{t+1}$, we first compute $y_{t}$ as follows
\begin{align}
	y_t = x_{t+1} - (A-BK_t)x_t - BD_t x_*^{(t)}.
\end{align}
The output of the equivalent channel \eqref{eq:ybar-t} is then given by $\bar{y}_{t}=\Gamma_0^\top y_{t}$. Since only the first $r$ element carry information about the message, we extract the informative sub-vector $\tilde{y}_{t}= \bar{y}_{t}[1:r]$ and discard the remaining elements.

The next step is to estimate $e_t$ from $\tilde{y}_t$ and the optimal estimate $\hat{e}_{t}=\mathbb{E}[e_t|\tilde{y}_t]$ is given by \eqref{eq:dec-vc}. Denote by the estimation error $e_{t+1}=e_t - \hat{e}_t$ and its covariance $\Sigma_{t+1}=\cov(e_{t+1})$; they are characterized in Lemma \ref{lem:virtual-est}. Finally, as in the fully-actuated setting, we can update the follower's estimate of the target state estimate via $x_*^{(t+1)} = x_*^{(t)}  + \hat{e}_{t} $  and the offset for both agents via $c_{t+1} = D_{t+1}x_*^{(t+1)}$. The new estimation error $e_{t+1}$ is transmitted at the next time step, and the procedure is repeated until the end of the time horizon. The full procedure is summarized in Algorithm~\ref{alg:2}.

\begin{algorithm}[!t]
	\caption{Generalized Implicit Coordination Scheme (under-actuated leader)}
	\label{alg:2}
	\SetAlgoLined
	\KwIn{Target state $x_*$, horizon $n$, integer $\tau = \lceil d_0/r \rceil$}
	
	\textbf{Initialization:} $ c_0 = 0, e_{0} = x_*, k=0$
	
	\For{$t = 0$ \KwTo $n-1$}{
		$k= t \ \mathtt{mod} \ \tau$ \\
		Compute the input to the virtual channel:\
	\[
	\tilde{s}_t = S^{\frac{1}{2}}_tP_k \Sigma_{t}^{-\frac{1}{2}}e_{t}
	\] \\
		 Compute the communication signal:\
		\[ 
			s_{t} =\Gamma_1 [\tilde{s}_t, \ \mathbf{0}] \qquad \text{// $d_1-r$ zeros are padded}
		\] \\
		Compute the control inputs:\
		\[ 
		v_t =-K^l_t x_t  + c^l_t + s_{t}, \ q_t = -K_t^f x_t + c^f_t
		\] \\
		The leader inputs $v_t$  and the follower inputs $q_t$; both agents observe the new state $x_{t+1}$ \\
		Compute the channel output of the virtual channel:\
		$$y_{t}=\Gamma_0^\top(x_{t+1}- (A-BK_t)x_t - Bc_t),\ \tilde{y}_{t}= y_{t}[1:r]$$ \\
		Estimate $\hat{e}_{t}=\mathbb{E}[e_t|\tilde{y}_t]$ as follows:
		$$\hat{e}_{t}= \Sigma_t^{\frac{1}{2}}  P^\top_k S_t^{\frac{1}{2}} \Psi_1(\Psi_1 S_t \Psi_1 + \bar{W}_1)^{-1}\tilde{y}_t $$ \\		
		
		Update the estimation error and the corresponding error covariance:\
		$$e_{t+1} = e_t - \hat{e}_{t}, \  \Sigma_{t+1} = \Sigma_{t}^{\frac{1}{2}} \bar{V}^{(k)}_t \Sigma_{t}^{\frac{1}{2}}$$ \\
		
		Update the target state estimate $x_*^{(t+1)} = x_*^{(t)}  + \hat{e}_{t} $ and offset:
		$$c_{t+1}=[c^l_{t+1}, c^f_{t+1}] = D_{t+1}x_*^{(t+1)}$$ \\
	}

\end{algorithm}

Although the follower’s estimation error under Algorithm~\ref{alg:2} is not guaranteed to decrease in every dimension at each time step, it decreases strictly across periods. Consequently, given a sufficiently long time horizon, the follower can obtain an arbitrarily accurate estimate of the target state. This result is formalized in the following theorem.

\begin{theorem} \label{thm: error-ua}
	Let  $\Sigma_t$ denote the follower's estimation error covariance of $x_*$ at time $t$ under Algorithm 2. If there is a $\sigma>0$ such that $\Lambda_t(j)\ge \sigma$ for all $t$ and $j$, then for any $i\ge 0$,  
	\begin{align*}
		\tr(\Sigma_{(i+1)\tau}) \le \frac{1+\sigma \pi }{1+2\sigma \pi} \tr(\Sigma_{i\tau}),
	\end{align*}
	where $\pi = \min \Pi_1(j)$ is the minimum diagonal entry of $\Pi_1$.
\end{theorem}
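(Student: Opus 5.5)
The plan is to track the trace of $\Sigma_t$ block by block, with the blocks taken in the rotated coordinates defined by $U_\tau$, and to combine two lower bounds on the total decrease of the trace over one period.

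\textbf{Setup.} For $k\in\{0,\dots,\tau-1\}$ define the block traces
\begin{align*}
\beta_k(t)\triangleq \tr\big(P_k U_\tau^\top \Sigma_t U_\tau P_k^\top\big).
\end{align*}
Since $U_\tau$ is unitary, $\sum_k \beta_k(t)=\tr(\Sigma_t)$. Write $c\triangleq \sigma\pi/(1+\sigma\pi)$. Note that $c/(1+c)=\sigma\pi/(1+2\sigma\pi)$, which is exactly $1-\frac{1+\sigma\pi}{1+2\sigma\pi}$. So the goal is to show that the drop over one period,
\begin{align*}
\Delta\triangleq \tr(\Sigma_{i\tau})-\tr(\Sigma_{(i+1)\tau}),
\end{align*}
satisfies $\Delta\ge \frac{c}{1+c}\sum_k\beta_k(i\tau)$.

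\textbf{Step 1 (one-step facts from Lemma~\ref{lem:virtual-est}).} Write $\Sigma_{t+1}=\Sigma_t-\Sigma_t^{\frac12}U_\tau(I-\tilde V^{(k)}_t)U_\tau^\top\Sigma_t^{\frac12}$. Because $I-\tilde V^{(k)}_t$ is diagonal with nonnegative entries, $\Sigma_{t+1}\preceq\Sigma_t$. Hence every $\beta_j(t)$ is nonincreasing in $t$. Next, as in the proof of Theorem~\ref{thm:covet},
\begin{align*}
\tr(\Sigma_{t+1})=\tr\big(\tilde V^{(k)}_t\,U_\tau^\top\Sigma_tU_\tau\big).
\end{align*}
The $k$-th diagonal block of $\tilde V^{(k)}_t$ has entries $1/(1+\Lambda_t(j)\Pi_1(j))\le 1/(1+\sigma\pi)$, and all other entries equal $1$. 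Therefore, with $k=t \bmod \tau$,
\begin{align*}
d_t\triangleq\tr(\Sigma_t)-\tr(\Sigma_{t+1})\ge c\,\beta_k(t).
\end{align*}

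\textbf{Step 2 (two bounds on $\Delta$).} Within period $i$, block $k$ is transmitted at time $t_k=i\tau+k$. Let $\delta_k\triangleq\beta_k(i\tau)-\beta_k(t_k)\ge0$. Summing Step 1 over the period gives
\begin{align*}
\Delta=\sum_k d_{t_k}\ge c\sum_k\big(\beta_k(i\tau)-\delta_k\big).
\end{align*}
On the other hand, $\Delta=\sum_k\big(\beta_k(i\tau)-\beta_k((i+1)\tau)\big)$, and by the monotonicity of each $\beta_k$ every summand is at least $\delta_k$, so $\Delta\ge\sum_k\delta_k$. Write $\delta\triangleq\sum_k\delta_k$ and $\beta\triangleq\tr(\Sigma_{i\tau})$. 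Then
\begin{align*}
\Delta\ge\max\{\delta,\;c(\beta-\delta)\}.
\end{align*}
The right-hand side is minimized over $\delta$ at $\delta=c\beta/(1+c)$, so $\Delta\ge \frac{c}{1+c}\beta$. This rearranges to the claimed bound.

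\textbf{Main obstacle.} The delicate point is the mixing introduced by the factor $\Sigma_t^{\frac12}$. Transmitting block $k$ does not shrink block $k$ of $\Sigma_{t+1}$ by a clean factor, and earlier transmissions in the period may already have reduced $\beta_k$ through correlations. Tracking block $k$ directly therefore fails. The device above sidesteps this. It never bounds $\beta_k(t_k)$ from below. Instead it charges the earlier reduction $\delta_k$ to the global monotonicity argument, and the remainder $\beta_k(i\tau)-\delta_k$ to the trace drop at step $t_k$.

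One bookkeeping detail remains to check: that the block of $\tilde V^{(k)}_t$ which is strictly contractive is the same block that $P_k$ selects, given the indexing $\mathtt{diag}(I_{(k-1)r},\tilde V_t,I_{(\tau-k)r})$ with $k$ ranging over $0,\dots,\tau-1$. I expect this to be an off-by-one convention that is harmless once the indexing is fixed consistently.
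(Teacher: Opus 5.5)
Your proof is correct and is essentially the paper's argument: the paper bounds each step's drop $\tr((I-\bar V^{(k)}_{t_k})\Sigma_{t_k})$ from below by replacing $\Sigma_{t_k}$ with the end-of-period covariance $\Sigma_{(i+1)\tau}$, sums the contractive blocks over the period to get $\Delta\ge c\,\tr(\Sigma_{(i+1)\tau})=c(\beta-\Delta)$, and rearranges; your two bounds encode exactly this, since $\delta\le\Delta$ gives $\Delta\ge c(\beta-\delta)\ge c(\beta-\Delta)$ directly, so the max--min over $\delta$ is not needed. Your indexing worry is harmless: the appendix computation gives $\bar V^{(k)}_t=U_\tau(I+P_k^\top\Lambda_t\Pi_1P_k)^{-1}U_\tau^\top$, so the contractive block sits precisely at the coordinates $kr+1,\dots,(k+1)r$ selected by $P_k$.
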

\begin{IEEEproof}
	Note that
	\begin{align*}
		\Sigma_{i\tau} - \Sigma_{(i+1)\tau} & = \sum_{k=0}^{\tau-1} (\Sigma_{i\tau + k} - \Sigma_{i\tau + k + 1} ) \\
		& \overset{(a)}{=} \sum_{k=0}^{\tau-1} \left[ \Sigma^{\frac{1}{2}}_{i\tau + k} (I - \bar{V}_{i\tau + k}^{(k)}) \Sigma^{\frac{1}{2}}_{i\tau + k} \right],
	\end{align*}
	where (a) follows from Lemma~\ref{lem:virtual-est}. According to the definition of $\bar{V}_{i\tau + k}^{(k)}$ in Lemma~\ref{lem:virtual-est}, it is easy to see that $\Sigma_{i\tau + k} \succeq \Sigma_{i\tau + k + 1} $ for all $k$. As a result, $\Sigma_{i\tau + k} \succeq \Sigma_{(i+1)\tau  } $ for all $k$. We then have
	\begin{align} \label{eq: tr-diff}
		\tr(\Sigma_{i\tau}) - \tr(\Sigma_{(i+1)\tau}) & = \sum_{k=0}^{\tau-1} \tr\left(\Sigma^{\frac{1}{2}}_{i\tau + k} (I - \bar{V}_{i\tau + k}^{(k)}) \Sigma^{\frac{1}{2}}_{i\tau + k}  \right) \notag \\
		& = \sum_{k=0}^{\tau-1} \tr\left( (I - \bar{V}_{i\tau + k}^{(k)}) \Sigma_{i\tau + k}  \right) \notag \\
		& \ge \sum_{k=0}^{\tau-1} \tr\left( (I - \bar{V}_{i\tau + k}^{(k)}) \Sigma_{(i+1)\tau }  \right) \notag \\
		& = \tr\left(\sum_{k=0}^{\tau-1}(I - \bar{V}_{i\tau + k}^{(k)}) \Sigma_{(i+1)\tau }  \right),
	\end{align}
	where the inequality follows from the fact that $I - \bar{V}_{i\tau + k}^{(k)} \succeq 0$ and $\Sigma_{(i\tau + k)} \succeq \Sigma_{(i+1)\tau } $. To see this, note that, for any PSD matrices $M\succeq M' \succeq 0$,
	\begin{align*}
		&\tr\left( (I - \bar{V}_{i\tau + k}^{(k)}) M \right) - \tr\left( (I - \bar{V}_{i\tau + k}^{(k)}) M' \right) \\
		=&\tr\left( (I - \bar{V}_{i\tau + k}^{(k)}) (M-M') \right)  \\
		=& \tr\left( (M-M')^{\frac{1}{2}}(I - \bar{V}_{i\tau + k}^{(k)}) (M-M')^{\frac{1}{2}} \right) \ge 0.
	\end{align*}
	Now, according to the definition of $\bar{V}_{i\tau + k}^{(k)}$,
	\begin{align*}
		\sum_{k=0}^{\tau-1}\bar{V}_{i\tau + k}^{(k)} = U_\tau \sum_{k=0}^{\tau-1}\tilde{V}_{i\tau + k}^{(k)} U_\tau^\top = U_\tau M_i U_\tau^\top,
	\end{align*}
	where $M_i\in \mathbb{R}^{d_0\times d_0}$ is a diagonal matrix, with the $(kr+j)$-th diagonal entry given by
	\begin{align*}
		M_i(kr+j) = \frac{1}{1+S_{i\tau + k}(j) \Pi_1(j)}, \quad 0\le k \le \tau-1, 1\le j \le r.
	\end{align*}
	Let $\tilde{\Sigma}_{(i+1)\tau} = U_\tau^\top \Sigma_{(i+1)\tau} U_\tau$.
	It then follows from \eqref{eq: tr-diff} that
	\begin{align}
		\tr(\Sigma_{i\tau}) - \tr(\Sigma_{(i+1)\tau}) & \ge \tr((I-M_i) U_\tau^\top \Sigma_{(i+1)\tau} U_\tau) \\
		& = \sum_{j=1}^{d_0} [1-M_i(j)] \tilde{\Sigma}_{(i+1)\tau}(j) \\  \label{eq: tr-diff2}
		& \ge \left(1-\frac{1}{1+\sigma \pi}\right) \sum_{j=1}^{d_0}  \tilde{\Sigma}_{(i+1)\tau}(j) \\  \label{eq: tr-diff3}
		& = \frac{\sigma \pi }{1+\sigma \pi} \tr(\Sigma_{(i+1)\tau}), 
	\end{align}
	where the inequality~\eqref{eq: tr-diff2} follows from the assumption that $\Pi_1(j)\ge \pi$ and $S_{i\tau+k}(j)\ge \sigma$. Re-arranging the above inequality yields
	\begin{align*}
		\tr(\Sigma_{(i+1)\tau})  \le \frac{1+\sigma \pi }{1+2\sigma \pi} \tr(\Sigma_{i\tau}). 
	\end{align*}
	This completes the proof.
	
\end{IEEEproof}

\subsection{Communication Power Optimization}
This part focuses on the optimization of communication power, characterized by the sequence of covariance matrices $\{S_{t}\}$. As described in the previous subsection,  for analytical and computational tractability, we consider $S_{t}$ to be simultaneously diagonalizable with $\Psi_1 \bar{W}^{-1}\Psi_1$. As a result, we express $S_{t}=U_1 \Lambda_{t} U_1^\top$ for all $t$. Our goal is therefore to find the optimal sequence of diagonal matrices $\{\Lambda_{t} \}$ that minimizes the overall control cost $J_n$. 

Let $\bar{S}_t \triangleq \dig(S_t,0)\in \mathbb{R}^{d_1\times d_1}$. Although $\bar{S}_t$ does not admit a true matrix square root in the standard sense, for notational convenience we define $\bar{S}_t^{\frac{1}{2}} \triangleq \dig(S_t^{\frac{1}{2}},0)\in \mathbb{R}^{d_1\times d_1}$, so that $\bar{S}_t^{\frac{1}{2}}\bar{S}_t^{\frac{1}{2}}=\bar{S}_t$. In addition, let $\bar{P}_k = [P_k,0]\in \mathbb{R}^{d_1\times d_0}$. Then
\begin{align*}
	s_t = \Gamma_1 \bar{S}_t^{\frac{1}{2}} \bar{P}_k \Sigma_{t}^{-\frac{1}{2}}e_{t}.
\end{align*}
We thus can express the input at time $t$ under the implicit coordination scheme as follows:
\begin{align} \label{eq: ut-under}
	u_t &= -K_t x_t + D_t x_*^{(t)} + \tilde{I}{s}_t  \notag \\
	& = -K_t(x_t -x_*) + c'_t + (\tilde{I} \Gamma_1 \bar{S}_t^{\frac{1}{2}} \bar{P}_k \Sigma_{t}^{-\frac{1}{2}}  - D_t )e_t ,
\end{align}
where $c'_t$ and $\tilde{I}$ are defined under \eqref{eq: ut}. Using the same argument as in the fully actuated setting yields
\begin{align} \label{eq:error-zt-under}
	z_{t+1} = x_{t+1} - x_* =  \bar{A}_tz_t +  c''_t + (B_1\Gamma_1 \bar{S}_t^{\frac{1}{2}} \bar{P}_k \Sigma_{t}^{-\frac{1}{2}}  -BD_t )e_{t} + w_t,
\end{align}
where $\bar{A}_t = A-BK_t$ and $c''_t \triangleq (A-I)x_* + B c'_t $. We then can derive the quadratic cost for each $t<n$:
\begin{align} \label{eq:q-cost}
	\mathbb{E}[z_t^\top F z_t] + \mathbb{E}[{u}_t^\top G {u}_t]  =& \tr((F+K_t^\top GK_t)Z_t) + \tr(\Gamma_1^\top G_1 \Gamma_1 \bar{S}_t)+ \tr(D_t^\top G D_t \Sigma_t)  - 2\tr(D_t^\top G \tilde{I} \Gamma_1 \bar{S}_t^{\frac{1}{2}} \bar{P}_k \Sigma_{t}^{\frac{1}{2}} ) \notag\\
	& - 2\tr((\tilde{I} \Gamma_1 \bar{S}_t^{\frac{1}{2}} \bar{P}_k \Sigma_{t}^{-\frac{1}{2}} - D_t )^\top G K_t \Omega_t ) + \text{constant},
\end{align}
where the constant includes terms that are independent of the decision variable $\Lambda_t$.

Note that $\tilde{w}_t = P_1\Gamma^\top_0 w_t$. Using the same argument as in the derivation of \eqref{eq: cov-zt}-\eqref{eq: cov-pit} obtains
\begin{align} \label{eq: Zt-under}
	Z_{t+1}  = & \bar{A}_t Z_t\bar{A}_t ^\top + M + M^\top + B_1\Gamma_1\bar{S}_t\Gamma_1^\top B_1^\top  +BD_t \Sigma_t D_t^\top B^\top + W,   \\  
	\Sigma_{t+1}  =& \Sigma_{t}^{\frac{1}{2}} \bar{V}^{(k)}_t  \Sigma_{t}^{\frac{1}{2}},  \\   \label{eq: omegat-under}
	L_{t+1}  = &  \bar{A}_t L_t - B D_t + \Gamma_0 C_t^w  \Sigma_{t}^{-\frac{1}{2}} ,
\end{align}
where $L_t = \Omega_t \Sigma_t^{-1}$, $C^w_t\triangleq [0, \bar{W}_3^\top \bar{W}_1^{-1} \Psi_1 S^{\frac{1}{2}}_t P_k]$, and
\begin{align*}
	M\triangleq  B_1\Gamma_1 \bar{S}_t^{\frac{1}{2}} \bar{P}_k \Sigma_{t}^{\frac{1}{2}} (\bar{A}_t L_t - BD_t)^\top - BD_t \Sigma_{t}  L_t^\top \bar{A}_t^\top .
\end{align*}
By omitting the constant term in \eqref{eq:q-cost}, we define a cost function as follows: $l_t(Z_n,\Sigma_n, L_n, \Lambda_n) =\tr(Z_n F_n)$, and
\begin{align*}
	l_t(Z_t,\Sigma_t, L_t, \Lambda_t) =& \tr((F+K_t^\top GK_t)Z_t) + \tr(\Gamma_1^\top G_1 \Gamma_1 \bar{S}_t) + \tr(D_t^\top G D_t \Sigma_t)  \\
	& - 2\tr(D_t^\top G \tilde{I} \Gamma_1 \bar{S}_t^{\frac{1}{2}} \bar{P}_k \Sigma_{t}^{\frac{1}{2}} ) 
	 - 2\tr((\tilde{I} \Gamma_1 \bar{S}_t^{\frac{1}{2}} \bar{P}_k \Sigma_{t}^{-\frac{1}{2}} - D_t )^\top G K_t L_t \Sigma_t ), \ 0\le t\le n-1.
\end{align*}
The optimization of communication power then reduces to an optimal control problem (i.e., a deterministic MDP) with system dynamics given by \eqref{eq: Zt-under}-\eqref{eq: omegat-under}, cost function $l_t$, and the objective defined as
\begin{align*}
	\min_{\{\Lambda_t\}} \ \sum_{t=0}^{n} l_t(Z_t,\Sigma_t, L_t, \Lambda_t) .
\end{align*}
Unfortunately, due to the third term in \eqref{eq: omegat-under}, the above MDP can not be simplified without loss of optimality as in the fully actuated setting, making it difficult to derive the necessary conditions for the optimal solution. Nevertheless, the problem can still be tackled using differential dynamic programming (DDP) methods \cite{jacobson1970differential}, such as gradient-based iterative LQR (iLQR), or reinforcement learning methods such as soft actor-critic (SAC) \cite{SAC} and proximal policy optimization (PPO) \cite{PPO}. 

\begin{remark}
	In Algorithm~\ref{alg:2}, the projection matrices $P_k$ are applied in a round-robin manner, following the default order $k=0, 1, \cdots, \tau -1$. In practice, however, any ordering can be used, and Theorem~\ref{thm: error-ua} still holds as long as the round-robin scheme is maintained. It is worth noting that the choice of ordering may affect the overall control cost, and the default order considered here is not necessarily optimal.
\end{remark}

\section{Numerical Results} \label{sec:exp}
This section presents numerical results demonstrating the effectiveness of the proposed implicit coordination scheme. The results verify the theoretical insights developed in the previous sections. In the fully actuated setting, although the leader can drive the system to the target state without assistance from the follower, incorporating implicit communication to coordinate the two controllers significantly reduces the overall control cost.  In the under-actuated setting, the target state cannot be achieved without coordination; however, implicit communication enables the controllers to reach the target state with a small control cost.  The experiments are implemented in Python, and the results shown in the figures are averaged over 50 independent runs.

\subsection{Fully Actuator Setting}
We first conduct experiments on a fully actuated system with dynamics defined as
\begin{align*}
	A = \begin{bmatrix}
	1.5& 0.2& 0& 0.7 \\
	0& 0.5& 0.5& 0.3 \\
	0.2& 0& 1.9& 0.4 \\
	0.3& 0& 0.3& 1.7
	\end{bmatrix},
	B_1 = \begin{bmatrix}
		1& 2& 0& 0 \\
		0& 0& 1& 0 \\
		0& 1.2& 0& 0 \\
		0& 0& 0& 1.3
	\end{bmatrix},
	B_2 = \begin{bmatrix}
		0& 0 \\
		1& 0 \\
		2& 0 \\
		0& 3
	\end{bmatrix}.
\end{align*}
The noise covariance is $W=\dig(0.1, 0.1, 0.1, 0.1)$. Moreover, the matrices defining the quadratic cost are $F=\dig(2,1,1,2)$, $F_n = 10 I, G_1=\dig(2,2,4,6)$, and $G_2 = \dig(2,2)$. Finally, the distribution of the target state is defined as $\mathcal{N}(0,\Sigma_0)$, where $\Sigma_0 = \dig(5,5,5,5)$.

We compare our method with two benchmarks: the first is control with explicit communication (denoted by \texttt{Ex-Comm} in the figures), which assumes that the two controllers can communicate through an explicit communication channel, such that the follower can receive the target state perfectly from the leader at the beginning. Under this assumption, the problem reduces to a standard LQG control problem whose optimal control law can be computed easily. The resulting control cost of \texttt{Ex-Comm} therefore serves as a lower bound on the optimal control cost of our problem. The second benchmark is control solely by the leader (\texttt{Leader-only}), where the follower applies zero input and the leader applies the optimal control law for the system $(A,B_1)$, as if the follower were absent. \texttt{Leader-only} is a natural choice for control without communication.

For our method---control with implicit communication (\texttt{Im-Comm})---we consider two implementations that differ in how the communication power is determined. The first implementation, denoted by \texttt{Im-Comm-opt} in the figures, computes the communication power using the necessary optimality conditions given in \eqref{eq:theta-Z2}-\eqref{eq:grad-a}. In this case, the covariance of the communication signal at time $t$ takes the form $S_t = U\Lambda_t U^\top$, where $\Lambda_t = a_t H^{-1}$ and $a_t$ determined from the necessary optimality conditions. The second implementation determines the communication power heuristically (\texttt{Im-Comm-heu}). In particular, we set $\Lambda_t = \theta^t I$ at time $t$, where $\theta=0.88$ in our experiments.

\begin{figure*}[t]
	\centering
	\begin{subfigure}[b]{0.32\textwidth}
		\centering
		\includegraphics[width=\textwidth]{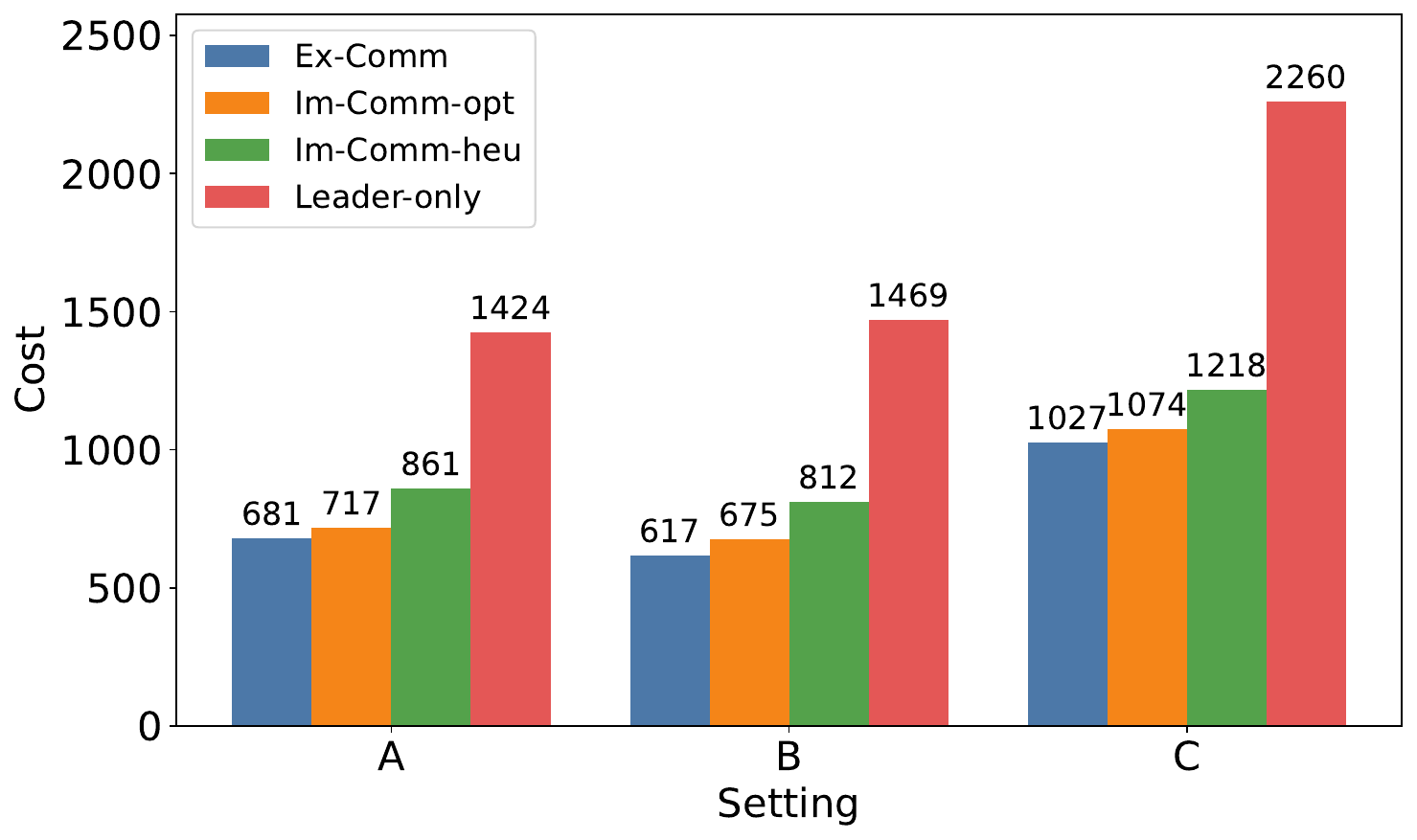}
		\caption{Control cost comparison} 
		\label{fig:cost_fa}
	\end{subfigure}
	\begin{subfigure}[b]{0.32\textwidth}  
		\centering 
		\includegraphics[width=\textwidth]{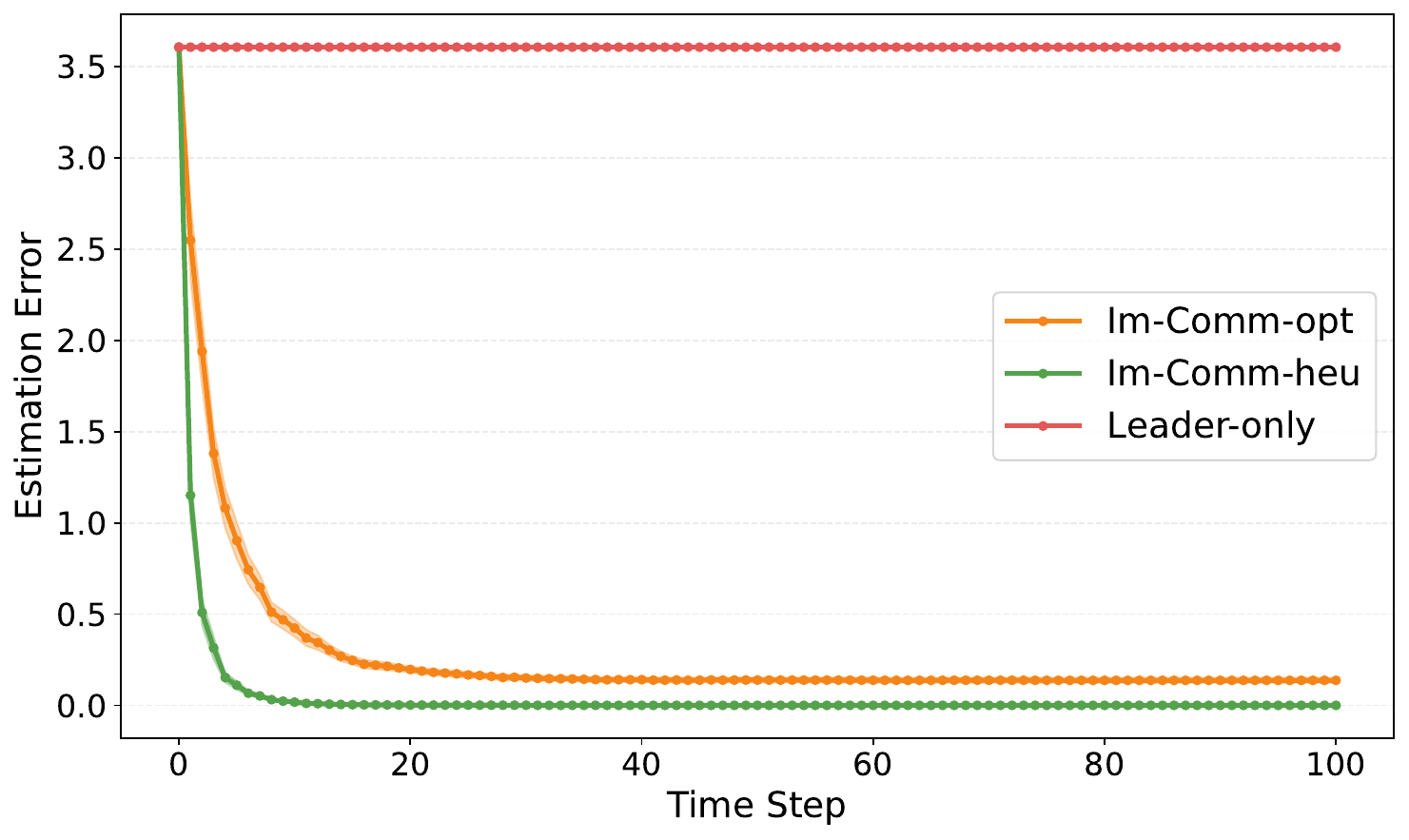}
		\caption{Estimation error over time}  
		\label{fig:et_fa}
	\end{subfigure}
	\begin{subfigure}[b]{0.32\textwidth}  
		\centering 
		\includegraphics[width=\textwidth]{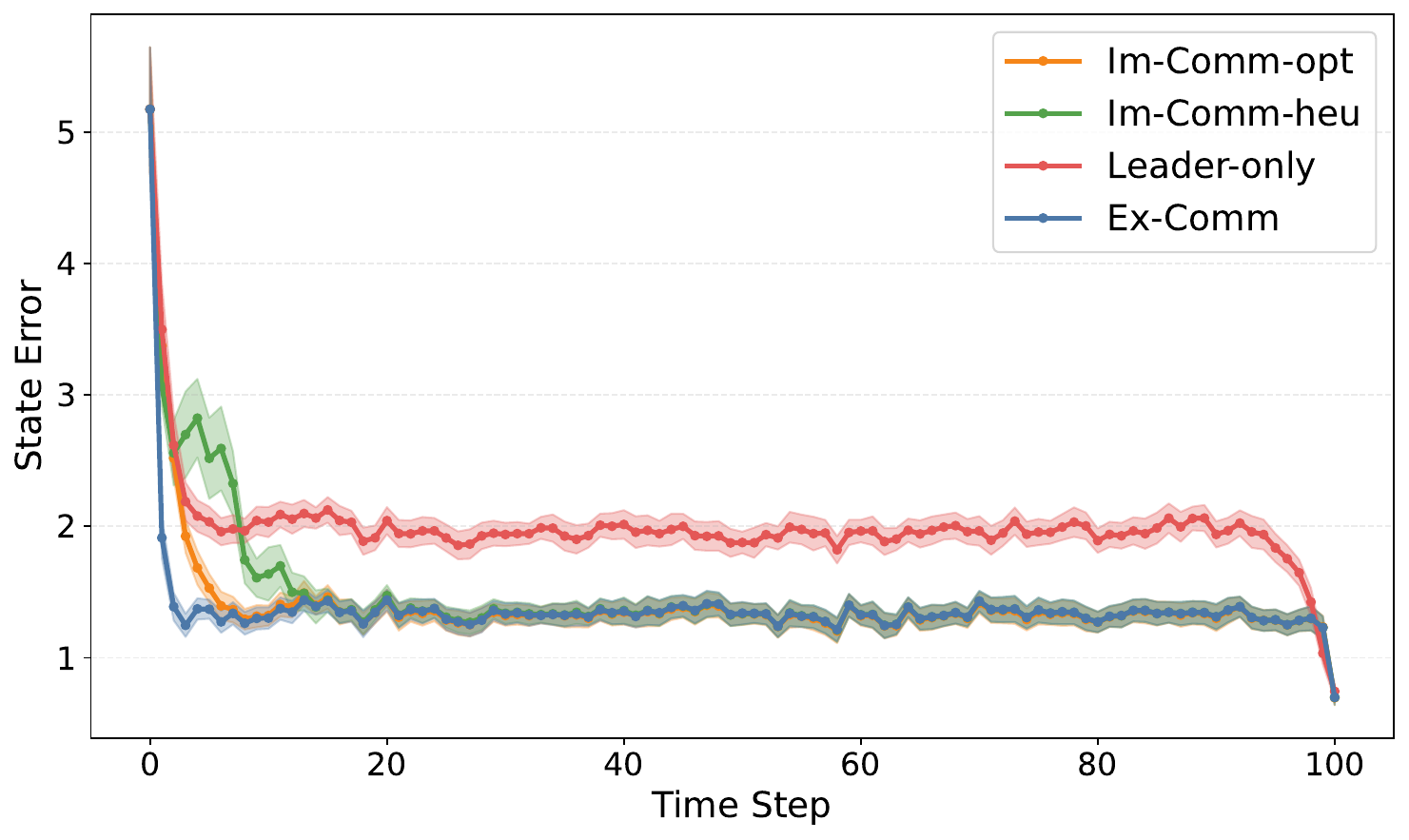}
		\caption{Distance to the target state over time}  
		\label{fig:zt_fa}
	\end{subfigure}
	\caption{Performance of the implicit coordination scheme in the fully actuated setting. \texttt{Ex-Comm}: control with explicit communication; \texttt{Im-Comm}: control with implicit communication, with suffixes \texttt{opt} and \texttt{heu} indicating optimally and heuristically chosen communication power, respectively; \texttt{Leader-only}: only the leader controls the system, while the follower applies zero input. In (a), different settings correspond to different target states.}
	\label{fig:fullact}
	
\end{figure*}

We first compare the overall control costs of all the above methods in Fig.~\ref{fig:cost_fa}, where different settings correspond to different target states. Specifically, target states in settings A, B, and C are randomly chosen as $[-1, 2,  2,  -2], [3, -1,  2,  1]$, and $[1, -2,  -3,  3]$, respectively. As shown in the figure, \texttt{Leader-only} results in a large control cost, typically more than twice that of \texttt{Ex-Comm}. In contrast, incorporating implicit communication to coordinate the two controllers significantly reduces the control cost. When the communication power is determined optimally, \texttt{Im-Comm-opt} achieves a control cost that is close to the cost of \texttt{Ex-Comm}. Even when the communication power is chosen heuristically, \texttt{Im-Comm-heu} still achieves good performance, demonstrating that the proposed implicit coordination scheme is robust to suboptimal choices of communication power. 

We next examine how the follower's estimation error evolves over time. Fig.~\ref{fig:et_fa} illustrates the trace of  the follower's estimation error covariance (i.e., $\tr(\Sigma_t)$) in setting A. The estimation error under \texttt{Leader-only} remains constant over time, since there does not exist any form of communication between the follower and the leader. In contrast, the estimation error of the two implicit communication methods decreases rapidly to a small value (determined by $\epsilon$ in the boundary condition \eqref{eq: bd-con}), indicating that the follower can accurately estimate the target state after only a few time steps. This observation verifies Theorem~\ref{thm:covet} and explains why implicit communication can reduce the overall control cost. Interestingly, the better-performing method, \texttt{Im-Comm-opt}, does not reduce the estimation error as quickly as \texttt{Im-Comm-heu}. This is because faster error reduction typically requires larger communication power, which in turn increases the control cost. Therefore, communication power and estimation accuracy must be carefully balanced.

We also compare the distance to the target state, defined as $||z_t|| = ||x_*-x_t||$, under the above methods.  Fig.~\ref{fig:zt_fa} shows the 2-norm of state error $z_t$ over time.
Since the leader is fully actuated, even \texttt{Leader-only} can eventually drive the system to the target state (the state error at the final time step is not exactly zero due to process noise). However, \texttt{Leader-only} yields relatively large state errors for most of the time horizon, which is the main reason for its high control cost. In contrast, the two implicit communication methods produce state error trajectories that are close to that of \texttt{Ex-Comm}. It is worth noting that, although \texttt{Im-Comm-heu} decreases the estimation error faster than \texttt{Im-Comm-opt}, its state error decreases more slowly. This is because, from the control perspective, the communication signal acts as additional noise. Since \texttt{Im-Comm-heu} uses larger communication power in the early stages, it introduces stronger disturbances to the control system, resulting in larger state errors.

\subsection{Under-actuator Setting}

\begin{figure*}[t]
	\centering
	\begin{subfigure}[b]{0.32\textwidth}
		\centering
		\includegraphics[width=\textwidth]{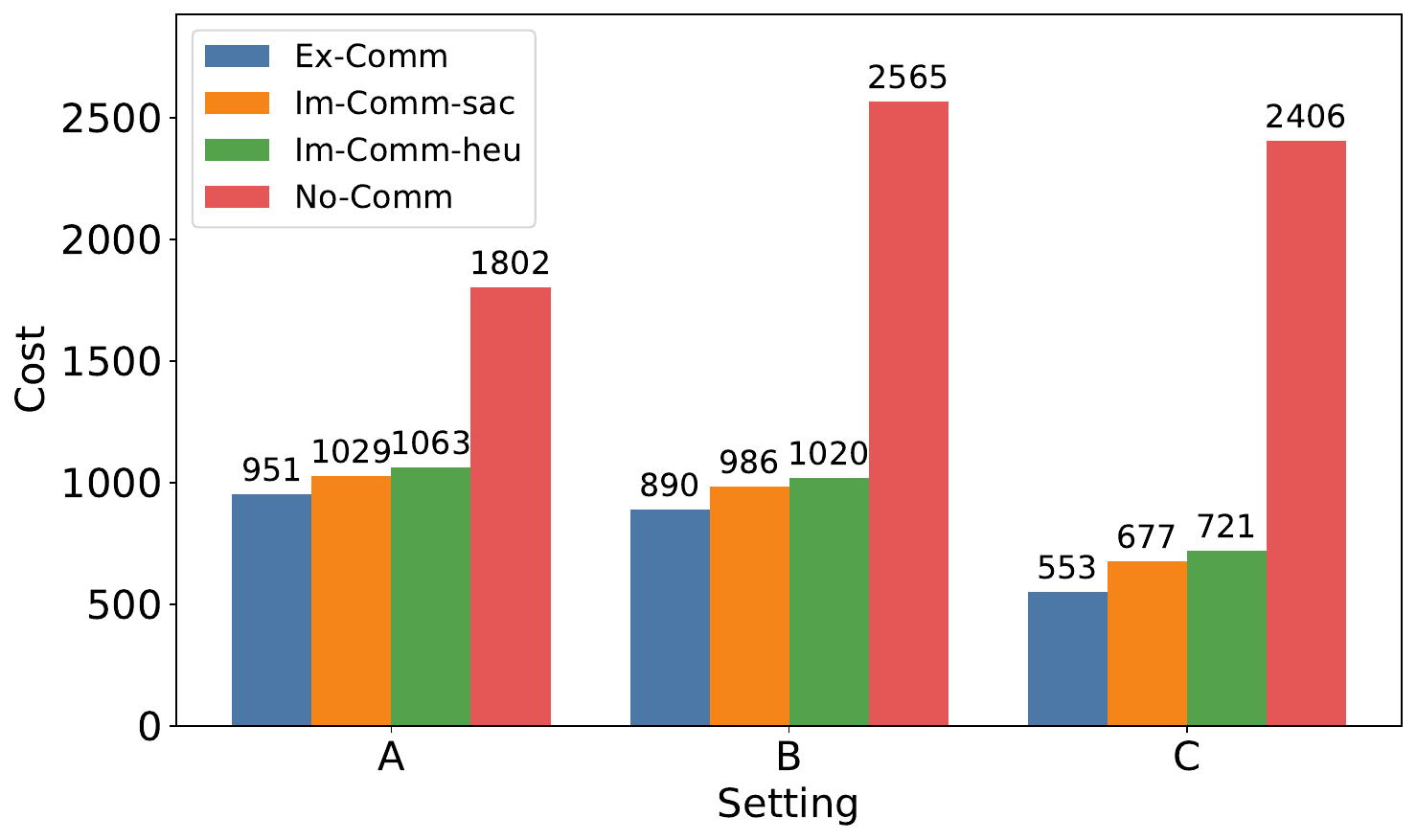}
		\caption{Control cost comparison} 
		\label{fig:cost_ua}
	\end{subfigure}
	\begin{subfigure}[b]{0.32\textwidth}  
		\centering 
		\includegraphics[width=\textwidth]{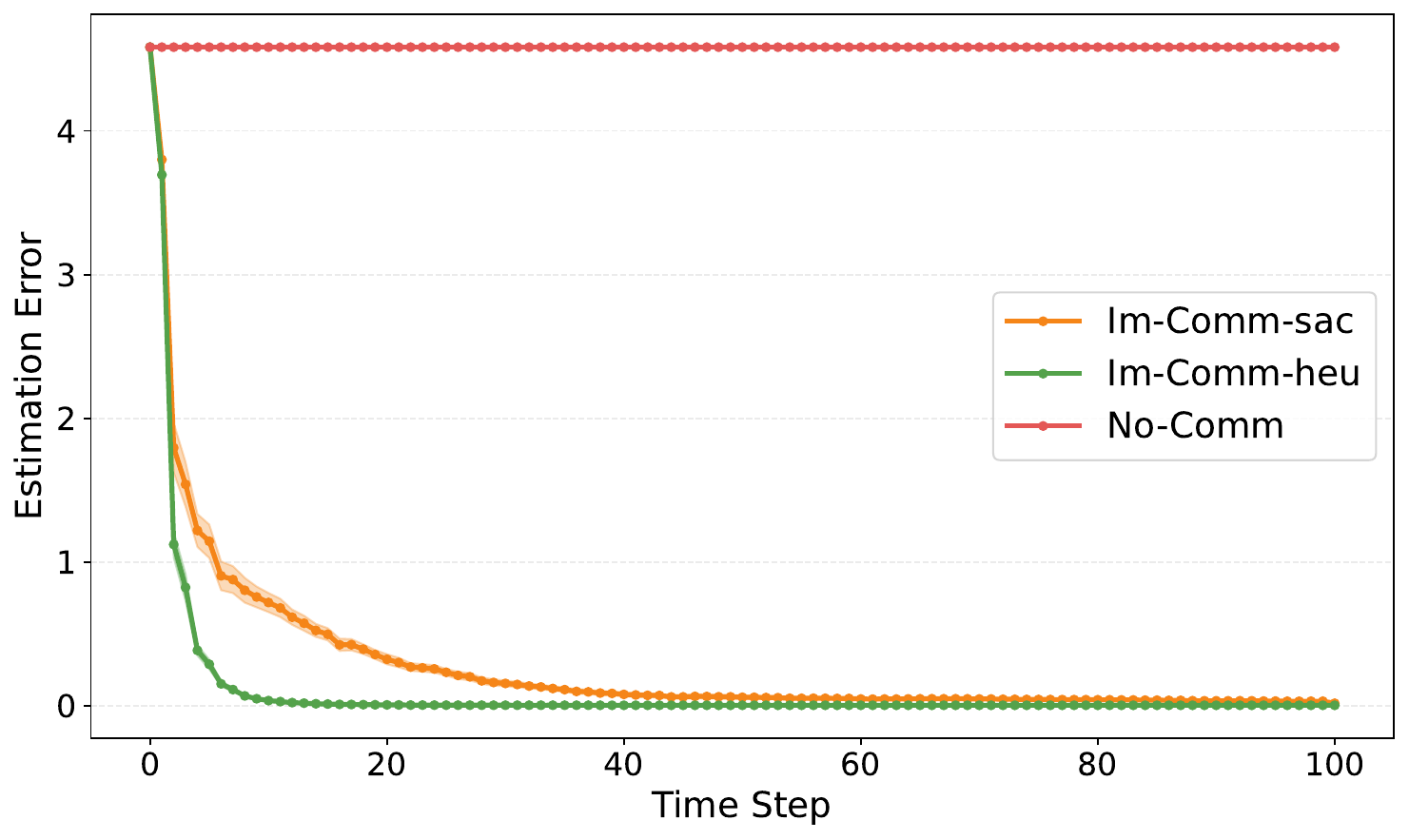}
		\caption{Estimation error over time}  
		\label{fig:et_ua}
	\end{subfigure}
		\begin{subfigure}[b]{0.32\textwidth}  
		\centering 
		\includegraphics[width=\textwidth]{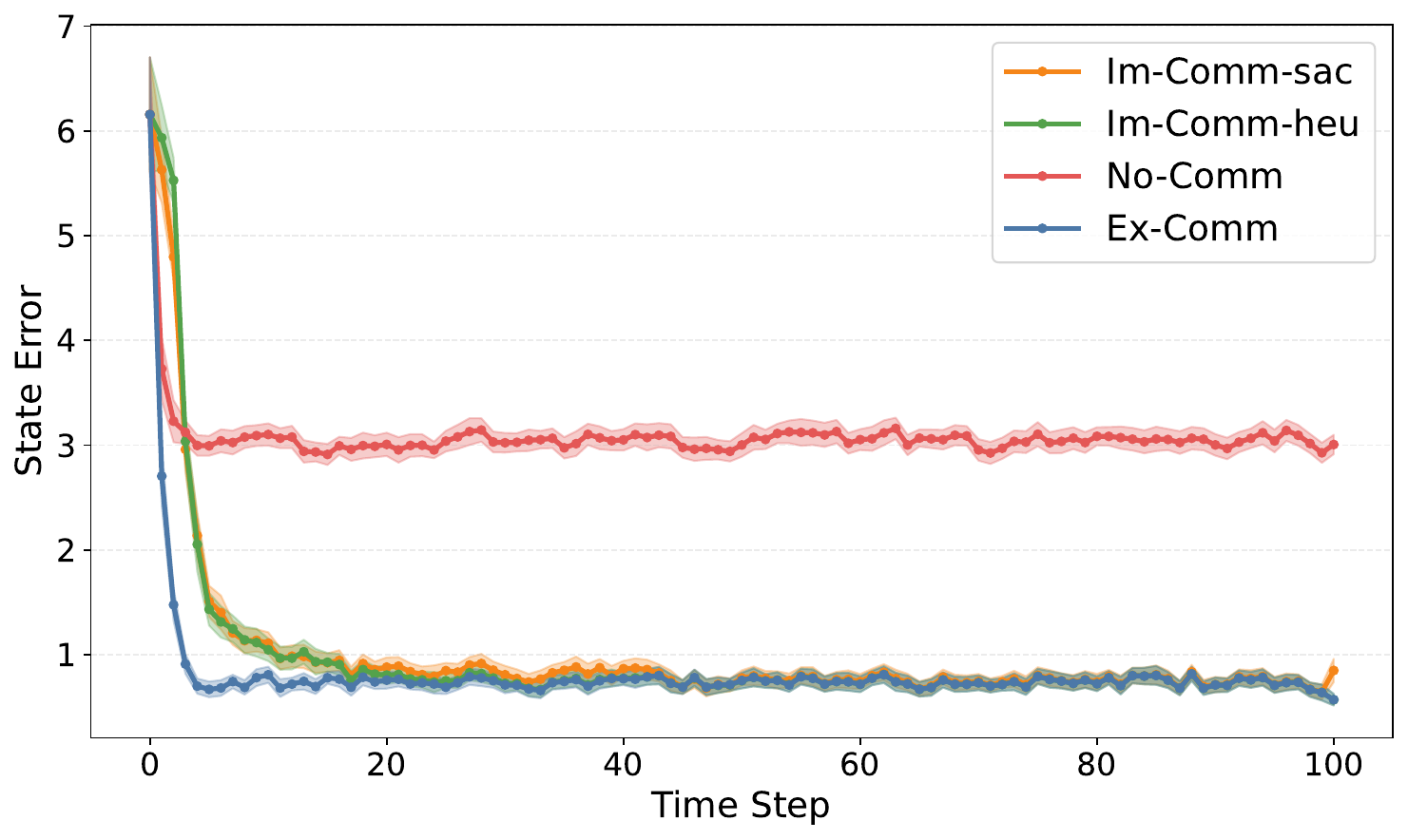}
		\caption{Distance to the target state over time}  
		\label{fig:zt_ua}
	\end{subfigure}
	\caption{Performance of the implicit coordination scheme in the under-actuated setting. \texttt{Im-Comm-sac}: control with implicit communication, where the communication power is selected using the SAC algorithm; \texttt{No-Comm}: control without communication.}	
	\label{fig:underact}
	
\end{figure*}

The proposed method is also evaluated on a under-actuated system defined as
\begin{align*}
	A = \begin{bmatrix}
		1.5& 0.2& 0& 0 \\
		0& 2.2& 0.5& 0.3 \\
		0& 0.2& 0.9& 0.4 \\
		0.2& 0& 0.3& 0.7
	\end{bmatrix},
	B_1 = \begin{bmatrix}
		1& 0 \\
		0& 0 \\
		0& 1.2 \\
		0& 0
	\end{bmatrix},
	B_2 = \begin{bmatrix}
		0& 0 \\
		1& 0 \\
		0& 0 \\
		0& 1.5
	\end{bmatrix}.
\end{align*}
The noise covariance is $W=\dig(0.1, 0.1, 0.1, 0.1)$. Moreover, the matrices that define the quadratic cost are specified as $F=\dig(2,1,1,2)$, $F_n = 10 I, G_1=\dig(2,2)$, and $G_2 = \dig(3,3)$. The covariance of the target state is $\Sigma_0 = \dig(5,5,5,5)$.

As in the experiments in the fully actuated setting, we choose \texttt{Ex-Comm} as the first benchmark.
The second benchmark is control without communication (\texttt{No-Comm}), where the follower has no communication---either explicitly or implicitly---with the leader. As a result, the follower never learns the correct target state and thus applies the control law that targets the origin. For the implicit communication method, we again consider two implementations. The first, \texttt{Im-Comm-heu}, selects the communication power heuristically, as in the fully actuated experiments; the second, \texttt{Im-Comm-sac}, determines the communication power using the soft actor-critic (SAC) algorithm. In particular, since the dynamics of the communication power optimization are complex, model-based methods such as iLQR require computing gradients, which is computationally demanding. We therefore adopt a model-free approach: we first construct an environment simulator using the dynamic equations \eqref{eq: Zt-under}-\eqref{eq: omegat-under}, and then apply the SAC algorithm to learn a policy by interacting with the simulator rather than explicitly computing gradients.

The control costs under different methods are compared in Fig.~\ref{fig:cost_ua}, where settings A, B, and C correspond to randomly chosen target states $[-1, 2,  2,  -2], [1, -1,  -2,  3]$, and $[2, -2,  3,  2]$, respectively. Fig.~\ref{fig:et_ua} and Fig.~\ref{fig:zt_ua} illustrate how the estimation error and state error evolves over time in setting C. As expected, \texttt{No-Comm} results in a large control cost because the two controllers can not drive the system to the target state without communication. Its state error remains at a high level throughout the time horizon, as shown in Fig.~\ref{fig:zt_ua}. In contrast, the proposed implicit communication methods result in small control costs that are close to the lower bound achieved by \texttt{Ex-Comm}. In this group of experiments, our implicit communication method again demonstrates strong robustness: the performance of \texttt{Im-Comm-heu} is vey close to that of \texttt{Im-Comm-sac}, in terms of both the control cost and the state error trajectory. This indicates that near-optimal control performance can be achieved even with a simple heuristic allocation of communication power.

\section{Conclusion and Future Work} \label{sec:conclusion}
In this paper, we proposed an ICoCo framework for decentralized LQG control, in which communication is coupled with control not only at the level of the message, but also at the level of the channel. Specifically, we studied a setting with asymmetric information, where a leader and a follower must coordinate to drive a linear system to a target state. The target state, however, is known only to the leader, and no explicit communication channel is available for conveying this information to the follower. Motivated by the idea of implicit communication, we proposed a coordination scheme in which the control plant itself serves as an implicit communication channel. Under this scheme, the leader encodes the target state into its control input by incorporating an additive Gaussian signaling term, while the follower progressively refines its estimate of the target state from the observed state evolution. We proved that, as long as the covariance matrix of the signaling term remains PD, the follower’s estimation error decreases monotonically to zero as time goes to infinity. Consequently, the follower is able to coordinate with the leader increasingly well over time and eventually steer the system to the target state. To minimize the overall control cost, we formulated the communication power design problem as an optimal control problem. In the fully actuated leader setting, we derived necessary conditions for the optimal solution. In the under-actuated leader setting, although such necessary conditions are unavailable, the problem can still be formulated and solved numerically. Finally, experimental results demonstrated the effectiveness of the proposed scheme, showing that it enables the two agents to achieve the target state with a control cost close to that attainable with explicit communication.

Beyond these technical results, this work reveals an interesting interplay between communication and control in multi-agent systems. Enabling implicit communication incurs an additional cost for the leader, as the leader must deviate from its nominally optimal policy to convey information. Yet this signaling improves coordination between the agents and can thereby reduce the overall team cost. In other words, a local sacrifice by the leader yields a global benefit for the team. 

More broadly, our work highlights the great potential of ICoCo designs for decentralized control. Many promising directions remain for future research, both theoretically and practically. On the theoretical side, we have shown that, for the problem considered here, implicit communication can be explicitly designed with a simple structure. A natural question is whether such a design remains effective under other control tasks and information patterns. For instance, one important extension is the case where the leader and the follower have different noisy observations of the system state. In this setting, coordination becomes substantially more challenging because the leader no longer has perfect feedback of the communication process, and it is unclear whether exchanging noisy observations is beneficial.  On the practical side, it is also of significant interest to investigate how the proposed ICoCo framework can be deployed in real-world applications.

\appendix
\subsection{Proof of Lemma 1}
\begin{IEEEproof}[Proof of Lemma~\ref{lem:estimation}]
	By definition,
	\begin{align*}
		e_{t+1} =  e_{t} - \hat{e}_{t}& = e_{t} -  \Sigma_{t}^{\frac{1}{2}} S_t^{\frac{1}{2}} Q^\top B_1^\top (B_1QS_t Q^\top B_1^\top + W)^{-1}  (B_1 QS_t^{\frac{1}{2}} \Sigma_{t}^{-\frac{1}{2}} e_{t} + w_t)  \\
		& = e_{t} -  \Sigma_{t}^{\frac{1}{2}} S_t^{\frac{1}{2}} Q^\top B_1^\top W^{-\frac{1}{2}} (W^{-\frac{1}{2}}B_1 QS_t Q^\top B_1^\top W^{-\frac{1}{2}} + I)^{-1} W^{-\frac{1}{2}} (B_1 QS_t^{\frac{1}{2}} \Sigma_{t}^{-\frac{1}{2}} e_{t} + w_t).
	\end{align*} 
	For notation simplicity, let ${B}_w \triangleq W^{-\frac{1}{2}}B_1$. Then
	\begin{align*}
		e_{t+1} & = e_{t} -  \Sigma_{t}^{\frac{1}{2}} S_t^{\frac{1}{2}} Q^\top B_w^\top (B_w QS_t Q^\top B_w^\top + I)^{-1} W^{-\frac{1}{2}} (B_1 QS_t^{\frac{1}{2}} \Sigma_{t}^{-\frac{1}{2}} e_{t} + w_t)  \\
		& \overset{(a)}{=}  \Sigma_{t}^{\frac{1}{2}} V_t  \Sigma_{t}^{-\frac{1}{2}} e_{t} - \Sigma_{t}^{\frac{1}{2}} S_t^{\frac{1}{2}}Q^\top B_w^\top (B_wQS_t Q^\top B_w^\top + I)^{-1} W^{-\frac{1}{2}} w_t, \\
		& \overset{(b)}{=}   \Sigma_{t}^{\frac{1}{2}} V_t  \Sigma_{t}^{-\frac{1}{2}} e_{t} - \Sigma_{t}^{\frac{1}{2}} V_t S_t^{\frac{1}{2}} Q^\top  B_w^\top  W^{-\frac{1}{2}} w_t \\
		& = \Sigma_{t}^{\frac{1}{2}} V_t  \Sigma_{t}^{-\frac{1}{2}} e_{t} - \Sigma_{t}^{\frac{1}{2}} V_t S_t^{\frac{1}{2}} Q^\top  B_1^\top  W^{-1} w_t
	\end{align*} 
	where (a) follows from the matrix inversion lemma and the assumption that $S_t$ and $(B_wQ)^\top B_wQ$ can both be diagonalized by the unitary matrix $U$:
	\begin{align*}
		V_t & \triangleq  I - S_t^{\frac{1}{2}}Q^\top B_w^\top (B_wQ S_t Q^\top B_w^\top + I)^{-1} B_w QS_t^{\frac{1}{2}} \\
		& = (I + S_t^{\frac{1}{2}}Q^\top B_w^\top B_wQS_t^{\frac{1}{2}})^{-1} \\
		& = (I + S_t^{\frac{1}{2}}Q^\top B_1^\top W^{-1} B_1QS_t^{\frac{1}{2}})^{-1} \\
		& = U(I + \Lambda_t H)^{-1} U^\top \\
		& \triangleq U \hat{V}_t U^\top,
	\end{align*}
	and (b) follows from the push-through identity:
	\begin{align*}
		S_t^{\frac{1}{2}}Q^\top B_w^\top (B_wQS_tQ^\top B_w^\top + I)^{-1} & = (I + S_t^{\frac{1}{2}} Q^\top B_w^\top B_wQ S_t^{\frac{1}{2}} )^{-1} S_t^{\frac{1}{2}} Q^\top B_w^\top \\
		& = U(I + \Lambda_t H)^{-1} U^\top S_t^{\frac{1}{2}} Q^\top B_w^\top \\
		& = V_t S_t^{\frac{1}{2}} Q^\top B_w^\top.
	\end{align*}
	Since $\Lambda_t$ and $H$ are diagonal matrices, it is easy to see that $\hat{V}_t$ is also a diagonal matrix with the $i$-the diagonal element given by 
	\begin{align*}
		\hat{V}_t(i) = \frac{1}{1 + \Lambda_t(i) H(i)}.
	\end{align*}
	Clearly, the distribution of $e_{t+1}$ is $\mathcal{N}(0, \Sigma_{t+1})$, where 
	\begin{align*}
		\Sigma_{t+1} & = \Sigma_{t}^{\frac{1}{2}}{V}_t{V}_t  \Sigma_{t}^{\frac{1}{2}}  + \Sigma_{t}^{\frac{1}{2}}{V}_t S_t^{\frac{1}{2}} Q^\top B_w^\top B_wQ S_t^{\frac{1}{2}} {V}_t  \Sigma_{t}^{\frac{1}{2}} \\
		&=\Sigma_{t}^{\frac{1}{2}} U \hat{V}_t \hat{V}_t U^\top \Sigma_{t}^{\frac{1}{2}} + \Sigma_{t}^{\frac{1}{2}} U \hat{V}_t \Lambda_t H \hat{V}_t U^\top \Sigma_{t}^{\frac{1}{2}}  \\
		& = \Sigma_{t}^{\frac{1}{2}} U \hat{V}_t U^\top \Sigma_{t}^{\frac{1}{2}}. 
	\end{align*}
	This completes the proof.
	
\end{IEEEproof}

\subsection{Derivation of Equation \eqref{eq:theta-sigma}}
The gradient of the linear terms w.r.t. $\Sigma_t$ is trivial, hence we focus on the non-linear terms. Let
\begin{align*}
	h_1(\Sigma_t)\triangleq& \tr(\Sigma_{t}^{\frac{1}{2}} U(I+\Lambda_tH)^{-1}U^\top \Sigma_{t}^{\frac{1}{2}} \theta_{\Sigma,t+1}), \\
	h_2(\Sigma_t)\triangleq& \tr(\Lambda^{\frac{1}{2}}_t U^\top \Sigma_{t}^{\frac{1}{2}} L_{t+1}^\top \theta_{Z,t+1}Q_1 U),  \\
	h_3(\Sigma_t)\triangleq& \tr(\Lambda^{\frac{1}{2}}_t U^\top \Sigma_{t}^{\frac{1}{2}} (D_t + K_tL_t)^\top G \tilde{I}Q U). 
\end{align*}
First, we derive the gradient of $h_1$ w.r.t. $\Sigma_t$.
Let $X=\Sigma_{t}^{\frac{1}{2}}$, then $\Sigma_t = X^2$ and define
\begin{align*}
	h_1(\Sigma_t) = \bar{h}_1(X)\triangleq \tr(XV_tX \theta_{\Sigma,t+1}).
\end{align*}
The differential of $\bar{h}_1(X)$ w.r.t. $X$ is given by
\begin{align*}
	d\bar{h}_1 = \tr((dX) V_t X \theta_{\Sigma,t+1}) + \tr(X V_t (dX )\theta_{\Sigma,t+1}) = \tr((dX)[\theta_{\Sigma,t+1} X V_t +V_t X \theta_{\Sigma,t+1}])
\end{align*}
Note that $\theta_{\Sigma,t+1} X V_t +V_t X \theta_{\Sigma,t+1}$ is symmetric and that $d\Sigma_t = (dX)X + X(dX) $. Then by definition, the gradient of $h_1$ w.r.t. $\Sigma_t$, denoted by $\Theta_1=\nabla_{\Sigma_t} h_1$, is a symmetric matrix satisfying
\begin{align*}
	dh_1 & = \tr(\Theta_1 (d\Sigma_t)) =  \tr((dX)(X\Theta_1 +\Theta_1 X) ) \\
	& = d\bar{h}_1 = \tr((dX)[\theta_{\Sigma,t+1} X V_t +V_t X \theta_{\Sigma,t+1}]).
\end{align*} 
By matching matrices, we have
\begin{align*}
	\Sigma_{t}^{\frac{1}{2}} \Theta_1 + \Theta_1 \Sigma_{t}^{\frac{1}{2}} = \theta_{\Sigma,t+1} \Sigma_{t}^{\frac{1}{2}} V_t +V_t \Sigma_{t}^{\frac{1}{2}} \theta_{\Sigma,t+1}.
\end{align*}
This is the well-known Sylvester-Lyapunov equation, which has a unique solution $\Theta_1$ provided that $\Sigma_{t}^{\frac{1}{2}}$ is PD \cite{horn1994matrix}.

Next, we proceed to derive the gradient of $h_2$. As before, let $X=\Sigma_{t}^{\frac{1}{2}}$ and define
\begin{align*}
	\bar{h}_2(X) \triangleq \tr(X L_{t+1}^\top \theta_{Z,t+1}Q_1 S^{\frac{1}{2}}_t) = g_2(\Sigma_t). 
\end{align*}
Then the differential of $\bar{h}_2(X)$ w.r.t. $X$ is given by
\begin{align*}
	d\bar{h}_2 &=  \tr((dX) L_{t+1}^\top \theta_{Z,t+1}Q_1 S^{\frac{1}{2}}_t) \\
	&= \frac{1}{2} \tr((dX)(L_{t+1}^\top \theta_{Z,t+1}Q_1 S^{\frac{1}{2}}_t +   S^{\frac{1}{2}}_t Q_1^\top\theta_{Z,t+1}L_{t+1} )).
\end{align*}
We need the second line because $X$ is symmetric, hence only the symmetric part of $L_{t+1}^\top \theta_{Z,t+1}Q_1 S^{\frac{1}{2}}_t$ matters.
Now,
\begin{align*}
	dh_2 = \tr(\nabla_{\Sigma_t} h_2 (dX)) = \tr((dX)(X\nabla_{\Sigma_t} h_2 + \nabla_{\Sigma_t} h_2 X) )  = d\bar{h}_2.
\end{align*}
Matching matrices yields
\begin{align*}
	\Sigma_{t}^{\frac{1}{2}}\nabla_{\Sigma_t} h_2 + \nabla_{\Sigma_t} h_2 \Sigma_{t}^{\frac{1}{2}} = \frac{1}{2}(L_{t+1}^\top \theta_{Z,t+1}Q_1 S^{\frac{1}{2}}_t +   S^{\frac{1}{2}}_t Q_1^\top\theta_{Z,t+1}L_{t+1} ).
\end{align*}
Again, this Sylvester-Lyapunov equation has a unique solution $\nabla_{\Sigma_t} h_2$. 

Finally, using the same argument yields the gradient of $h_3$ w.r.t. $\Sigma_t$, which is determined by the following equation:
\begin{align*}
	\Sigma_{t}^{\frac{1}{2}}\nabla_{\Sigma_t} h_3 + \nabla_{\Sigma_t} h_3 \Sigma_{t}^{\frac{1}{2}} = \frac{1}{2}((D_t + K_tL_t)^\top G \tilde{I}Q S^{\frac{1}{2}}_t +   S^{\frac{1}{2}}_t (G \tilde{I}Q)^\top (D_t + K_tL_t)).
\end{align*}
Equation \eqref{eq:theta-sigma} can be easily derived by combining the above arguments.

\subsection{Proof of Lemma~\ref{lem:virtual-est}}
\begin{IEEEproof}[Proof of Lemma \ref{lem:virtual-est}]
	By definition,
	\begin{align*}
		\varepsilon_t = e_t - \hat{e}_t & = e_t  - \Sigma_t^{\frac{1}{2}}  P^\top_k S_t^{\frac{1}{2}} \Psi_1(\Psi_1 S_t \Psi_1 + \bar{W}_1)^{-1} (\Psi_1 S_t^{\frac{1}{2}} P_k \Sigma_t^{-\frac{1}{2}} e_t + \tilde{w}_t) \\
		& =  \Sigma_{t}^{\frac{1}{2}} \bar{V}^{(k)}_t  \Sigma_{t}^{-\frac{1}{2}} e_{t} - \Sigma_t^{\frac{1}{2}}  P^\top_k S_t^{\frac{1}{2}} \Psi_1(\Psi_1 S_t \Psi_1 + \bar{W}_1)^{-1}   \tilde{w}_t,
	\end{align*}
	where $\bar{V}^{(k)}_t $ is given by
	\begin{align*}
		\bar{V}^{(k)}_t  &\triangleq I -  P^\top_k S_t^{\frac{1}{2}}  \Psi_1(\Psi_1  S_t  \Psi_1 + \bar{W}_1)^{-1} \Psi_1 S_t^{\frac{1}{2}} P_k \\
		&= I - P^\top_k S_t^{\frac{1}{2}}  \Psi_1 \bar{W}^{-\frac{1}{2}}_1 ( \bar{W}^{-\frac{1}{2}}_1 \Psi_1 S_t^{\frac{1}{2}} P_k P_k^\top S_t^{\frac{1}{2}}  \Psi_1 \bar{W}^{-\frac{1}{2}}_1 + I)^{-1} \bar{W}^{-\frac{1}{2}}_1  \Psi_1 S_t^{\frac{1}{2}} P_k \\
		& \overset{(a)}{=} (I + P^\top_kS_t^{\frac{1}{2}}  \Psi_1 \bar{W}^{-1}_1 \Psi_1  S_t^{\frac{1}{2}} P_k)^{-1} \\
		& \overset{(b)}{=}(I +  P^\top_k U_1 \Lambda_t \Pi_1 U_1^\top P_k )^{-1} \\
		& = (I +  U_\tau P^\top_k \Lambda_t \Pi_1  P_k U_\tau^\top )^{-1} \\
		& = U_\tau (I +  P^\top_k \Lambda_t \Pi_1  P_k  )^{-1} U_\tau^\top \\
		& \triangleq U_\tau \tilde{V}^{(k)}_{t} U^\top_\tau,
	\end{align*}
	where $U_k\triangleq \mathtt{diag}(U_1, \cdots, U_1)\in \mathbb{R}^{d_0\times d_0}$. Here, (a) follows from the matrix inversion lemma, and (b) follows from the fact that $S_{t,i}$ and $\Psi_1 \bar{W}^{-1}_1 \Psi_1 $ are simultaneously diagonalizable by $U_1$; 
	Note that $\tilde{V}^{(k)}_t=(I +  P^\top_k \Lambda_t \Pi_1  P_k  )^{-1}$ is a $d_0\times d_0$ block diagonal matrix of the form $\tilde{V}^{(k)}_t = \mathtt{diag}(I_{(k-1)r}, \tilde{V}_{t},I_{(\tau-k)r})$, where $I_j$ denotes the $j\times j$ identity matrix, and $\tilde{V}_{t}=\Lambda_{t}\Pi_1$ is a diagonal matrix with the $j$-th diagonal entry given by
	\begin{align*}
		\tilde{V}_{t}(j) = \frac{1}{1+ \Lambda_{t}(j) \Pi_1(j)}.
	\end{align*}
	Furthermore,
	\begin{align*}
		P^\top_kS_t^{\frac{1}{2}}  \Psi_1(\Psi_1 S_t \Psi_1 + \bar{W}_1)^{-1} &= P^\top_k S_t^{\frac{1}{2}}  \Psi_1 \bar{W}^{-\frac{1}{2}}_1 ( \bar{W}^{-\frac{1}{2}}_1 \Psi_1 S_t^{\frac{1}{2}} P_k P_k^\top S_t^{\frac{1}{2}}  \Psi_1 \bar{W}^{-\frac{1}{2}}_1 + I)^{-1} \bar{W}^{-\frac{1}{2}}_1 \\
		& \overset{(a)}{=} (I + P^\top_kS_t^{\frac{1}{2}}  \Psi_1 \bar{W}^{-1}_1 \Psi_1  S_t^{\frac{1}{2}} P_k)^{-1} P^\top_kS_t^{\frac{1}{2}} \Psi_1\bar{W}^{-1}_1 \\
		& = \bar{V}^{(k)}_t  P^\top_k S_t^{\frac{1}{2}}  \Psi_1 \bar{W}^{-1}_1,
	\end{align*}
	where (a) follows from the push-through identity. Putting the above derivations together yields
	\begin{align*}
		\varepsilon_t =\Sigma_{t}^{\frac{1}{2}} \bar{V}^{(k)}_t  \Sigma_{t}^{-\frac{1}{2}} e_{t} -  \Sigma_{t}^{\frac{1}{2}}  \bar{V}^{(k)}_t  P^\top_k S_t^{\frac{1}{2}} \Psi_1 \bar{W}^{-1}_1 \tilde{w}_t.
	\end{align*}
	It follows that
	\begin{align*}
		\cov(\varepsilon_t)& = \Sigma_{t}^{\frac{1}{2}} \bar{V}^{(k)}_t \bar{V}^{(k)}_t \Sigma_{t}^{\frac{1}{2}} + \Sigma_{t}^{\frac{1}{2}}  \bar{V}^{(k)}_t  P^\top_k S_t^{\frac{1}{2}}  \Psi_1 \bar{W}^{-1}_1  \Psi_1  S_t^{\frac{1}{2}} P_k \bar{V}^{(k)}_t  \Sigma_{t}^{\frac{1}{2}}  \\
		& = \Sigma_{t}^{\frac{1}{2}} \bar{V}^{(k)}_t \bar{V}^{(k)}_t \Sigma_{t}^{\frac{1}{2}} + \Sigma_{t}^{\frac{1}{2}}  \bar{V}^{(k)}_t  U_\tau P^\top_k \Lambda_t {\Pi}_1 P_k U^\top_\tau \bar{V}^{(k)}_t  \Sigma_{t}^{\frac{1}{2}}  \\
		& =\Sigma_{t}^{\frac{1}{2}} \bar{V}^{(k)}_{t} \Sigma_{t}^{\frac{1}{2}} .
	\end{align*}
	This completes the proof.
		
\end{IEEEproof}

\bibliographystyle{IEEEtran}
\bibliography{reference}

\end{document}